\documentclass[a4paper,reqno,10pt]{amsart}
\usepackage[english]{babel}
\usepackage{amssymb,amsmath,hyperref}
\usepackage{bbold}
\usepackage{amsrefs}
\usepackage{bussproofs, epigraph, comment}
\usepackage{stackrel}
\usepackage{xcolor}


\newtheorem{thm}{Theorem}
\newtheorem{algo}[thm]{Algorithm}

\newtheorem{cor}[thm]{Corollary}
\newtheorem{defi}[thm]{Definition}
\newtheorem{rem}[thm]{Remark}
\newtheorem{nota}[thm]{Notation}

\newtheorem{ax}[thm]{Axiom}

\newtheorem{princ}[thm]{Principle}
\newtheorem{ack}[thm]{Acknowledgement}

\newtheorem*{question*}{Question}

\newcommand\be{\begin{equation}}
\newcommand\ee{\end{equation}}

\def\bdefi{\begin{defi}\rm}
\def\edefi{\end{defi}}
\def\bnota{\begin{nota}\rm}
\def\enota{\end{nota}}
\def\brem{\begin{rem}\rm}
\def\erem{\end{rem}}

\def\RCA{\textup{\textsf{RCA}}}

\def\WKL{\textup{\textsf{WKL}}}

\def\bye{\end{document}}

\def\N{{\mathbb  N}}

\def\({\textup{(}}
\def\){\textup{)}}

\def\st{\textup{st}}
\def\asa{\leftrightarrow}

\def\di{\rightarrow}

\def\M{\mathcal{M}}

\def\ACA{\textup{\textsf{ACA}}}

\def\paai{\Pi_{1}^{0}\textsf{\textup{-TRANS}}}
\def\Paai{\Pi_{1}^{1}\textsf{\textup{-TRANS}}}
\def\Deltat{\Delta_{1}^{1}\textsf{\textup{-TRANS}}}
\def\Deltac{\Delta_{1}^{1}\textsf{\textup{-CA}}}

\def\PRA{\textup{\textsf{PRA}}}
\def\QFAC{\textup{\textsf{QF-AC}}}
\def\PB{\textsf{PB}}

\def\EFA{\textup{\textsf{EFA}}}

\def\bdefi{\begin{defi}\rm}
\def\edefi{\end{defi}}
\def\bnota{\begin{nota}\rm}
\def\enota{\end{nota}}
\def\brem{\begin{rem}\rm}
\def\erem{\end{rem}}

\def\FIVE{\Pi_{1}^{1}\textup{-\textsf{CA}}_{0}}

\def\rel{\sqsubseteq}
\def\srel{\sqsubset}
\def\sler{\sqsupset}

\def\ATR{\textup{ATR}}

\def\INT{\textup{int}}

\def\RCAo{\textup{\textsf{RCA}}_{0}^{\omega}}
\def\RCAO{\RCA_{0}^{\Omega}}

\usepackage[foot]{amsaddr}

\setcounter{tocdepth}{2}

\begin{document}
\title{Searching through the reals}
\author{Sam Sanders}
\begin{abstract}
It is a commonplace to say that \emph{one can search through the natural numbers}, by which is meant the following:  For a property, decidable in finite time and which is not false for all natural numbers, checking said property 
starting at zero, then for one, for two, and so on, one will eventually find a natural number which satisfies the property, assuming no resource bounds.  By contrast, it seems one cannot search through the real numbers in any similarly `basic' fashion:  
The reals numbers are not countable, and their well-orders carry extreme logical strength compared to the basic notions involved in `searching through the natural numbers'.  
In this paper, we study two principles \textsf{(PB)} and \textsf{(TB)} from Nonstandard Analysis which essentially state that \emph{one can search through the reals}.  
These principle are \emph{basic} in that they involve only constructive objects of type zero and one, and the associated `search through the reals' 
amounts to nothing more than a bounded search \emph{involving nonstandard numbers as upper bound}, but independent of the \emph{choice} of this number.
We show that \textsf{(PB)} and \textsf{(TB)} are equivalent to known systems from the foundational program \emph{Reverse Mathematics}, namely respectively the existence of the hyperjump and $\Delta_{1}^{1}$-comprehension.  
We also show that \textsf{(PB)} and \textsf{(TB)} exhibit remarkable similarity to, respectively, 
 the Turing jump and recursive comprehension.    
In particular, we show that Nonstandard Analysis allows us to treat \emph{number quantifiers as `one-dimensional' bounded searches}, and \emph{set quantifiers as `two-dimensional' bounded searches}.  
\end{abstract}
\address{Department of Mathematics, Ghent University, Belgium \& Munich Center for Mathematical Philosophy, LMU Munich, Germany}
\email{sasander@me.com}
\maketitle
\vspace{-0.4cm}

\thispagestyle{empty}

\section{Introduction}
\subsection{Searching through the naturals and the reals}
It is a commonplace to say that \emph{one can search through the natural numbers}, by which is meant the following:  
\begin{quote}
For a property $Q(n)$, decidable in finite time and which is not false for all natural numbers, one successively checks if $Q(0), Q(1), Q(2), \dots$ holds, and one will eventually find a natural number $n$ such that $Q(n)$, assuming no further resource bounds.  
\end{quote}
In fact, Kleene defines the class of partial recursive functions as those obtained via primitive recursion plus the axiom \emph{Unbounded search}, and the latter exactly formalises the aforementioned informal description of `searching through the natural numbers';  We refer to  \cite{zweer}*{Def.\ 2.2, p.\ 10} for more details.  Furthermore, the semi-constructive \emph{Markov's principle} has a similar interpretation (See \cite{troelstra1}*{1.11.5}).    

\medskip

In contrast to the case of the natural numbers, it seems one cannot search through the real numbers in any remotely `basic' fashion:  
The reals numbers are not countable, and the existence of a well-order requires the axiom of choice.  Even fragments of the latter carry tremendous logical strength compared to the basic notions involved in `searching through the natural numbers';  See \cite{simpson2}*{Table 4} for a detailed overview of the strength of small fragments of the axiom of choice.  

\medskip

In this paper, we show that the framework of \emph{stratified Nonstandard Analysis} (See \cite{aveirohrbacek}) allows one to `search through the reals' in a rather basic fashion.  
In particular, we formulate a \emph{nonstandard} principle (\textsf{PB}) which essentially states that \emph{one can search through the reals}.  
The principle ($\PB$) is \emph{basic} in that it involves only \emph{constructive}\footnote{The exact meaning of `constructive' will be clarified in Section \ref{main1}.  The interpretation we have in mind is `acceptable in Bishop's \emph{Constructive Analysis}'.  See \cite{bish1} for the latter.\label{footkie}} objects of type $0$ and $1$, and the `search through the reals' amounts to nothing more than a bounded search through the natural numbers \emph{involving a nonstandard number as an upper bound}. 
It should be noted that the bounded search is \emph{independent of the choice of the nonstandard number}.  We show that \textsf{(PB)} is equivalent to a known principle, namely the Suslin functional (See e.g.\ \cite{kohlenbach2, avi2}).
Similarly, we formulate an analogous principle \textsf{(TB)} and prove equivalence to  $\Delta_{1}^{1}$-comprehension in functional form (See e.g.\ \cite{simpson2}*{I.11.8} for the latter).        

\medskip

As to the structure of this paper, we provide some more detailed motivation in Section \ref{motig}.  We introduce a suitable weak `base theory' in Section \ref{base} and recall known results. 
In Section \ref{main1}, we formulate the principle \textsf{(PB)} and prove its equivalence to the Suslin functional $(S^{2})$ over our base theory.  In Section \ref{main2}, we obtain similar results for $\Delta^{1}_{1}$-comprehension and the principle \textsf{(TB)}.    

\medskip

As to background information, $(S^{2})$ is the functional version of the strongest `Big Five' system $\FIVE$ studied in the foundational program \emph{Reverse Mathematics}.  The principle \emph{$\Delta_{1}^{1}$-comprehension} is also studied in the latter program.  
 We refer to \cite{simpson2,simpson1, avi2,kohlenbach2} for more details.  We do point out the following quote by Simpson:
\begin{quote}
From the above it is clear that the five basic systems $\RCA_{0}$, $\WKL_{0}$, $\ACA_{0}$, $\ATR_{0}$, $\FIVE$ arise naturally from investigations of the Main Question. 
The proof that these systems are mathematically natural is provided by Reverse Mathematics. (\cite{simpson2}*{p.\ 43}).     
\end{quote}
Hence, the principle \textsf{(PB)} is mathematically natural due to its equivalence to $(S^{2})$, the functional version of $\FIVE$.
Finally, we urge the reader to first consult Remark~\ref{ohdennenboom} so as to clear up a common misconception regarding Nelson's approach to Nonstandard Analysis.

\subsection{Motivation}\label{motig}
In this section, we discuss the background of, and more detailed motivation for, the topic of this paper.   
We first study the notion of `searching through the naturals' in Nonstandard Analysis.  
We only require very basic familiarity with Nelson's internal set theory, also introduced in Section \ref{base}.

\medskip

Firstly, we show that `searching through the naturals' amounts to a \emph{bounded} search in Nonstandard Analysis.  
To this end, recall that by Post's theorem a computable set can be described by a $\Delta_{1}^{0}$-formula, and vice versa (\cite{zweer}*{Theorem~2.2, p.\ 64}).  
Thus, consider the $\Delta_{1}^{0}$-formula, relative to `st', given by:
\be\label{valid}
(\forall^{\st}n^{0})[(\exists^{\st}k)f(n,k)=0\asa (\forall^{\st} m)g(n,m)\ne0].
\ee
Now define $p(n,h, M)$ as $(\mu k\leq M)h(n,k)=0$, if such exists and $M$ otherwise.  Then it is easy to show that for any infinite number $M^{0}$:
\be\label{karf}
(\forall^{\st}n^{0})\big[ (\exists^{\st}k)f(n,k)=0\asa p(n,f,M)\leq_{0}p(n,g, M)\big].
\ee
  Hence, to decide if a $\Delta_{1}^{0}$-formula (relative to `\st') holds, one need only \emph{perform a bounded search}, where the upper bound is any nonstandard number.


\medskip

Secondly, we show that \emph{relative to the Turing jump} `searching through the naturals' \emph{also} amounts to an explicit {bounded} search in Nonstandard Analysis, in contrast to the Turing jump's `oracle status'.    
To this end, consider the Turing jump functional:
\be\tag{$\exists^{2}$}
(\exists \varphi^{2})(\forall f^{1})\big[(\exists x^{0})f(x)=0 \asa \varphi(f)=0  \big], 
\ee
which by \cite{bennosam}*{Cor.\ 12} is equivalent over a version of \EFA~to
\be\tag{$\paai$}
(\forall^{\st} f^{1})\big[(\forall^{\st} x^{0})f(x)\ne0\di (\forall x)f(x)\ne0\big].
\ee
The latter is the Transfer principle from Nonstandard Analysis limited to $\Pi_{1}^{0}$-formulas.
As it turns out, $\paai$ provides a straightforward way to turn $\Pi_{1}^{0}$-formulas into \emph{bounded} formulas:  For standard $f^{1}$ possibly involving standard parameters and infinite $M^{0}$, $\paai$ implies that
\be\label{tranke}
(\forall x^{0})(f(x)\ne0)\asa (\forall x^{0}\leq_{0} M)(f(x)\ne0) 
\ee
Hence, to find a (standard) zero for standard $f^{1}$, one need only perform the bounded search $(\mu k\leq M)f(k)=0$.  
Furthermore, the latter (resp.\ the right-hand side of \eqref{tranke}) is elementary computable (resp.\ decidable) in terms of $f$ and $M$, and involves only objects of type zero besides $f$.     
This explicit nature, and the similarity to a $\Pi_{1}^{0}$-formula, should be contrasted to the right-hand side of $(\exists^{2})$.  
In other words, the right-hand side of \eqref{tranke} is much less of a `black box' than that of the `oracle' $(\exists^{2})$.    


\medskip

In short, the two previous examples suggest that `searching through the naturals' amounts to nothing more than a bounded search (involving an arbitrary nonstandard number) in Nonstandard Analysis.    
This search is `basic' in that it is given by an explicit formula, and is closely connected to the original formula.  

\medskip

The aim of this paper is to show that a \emph{similarly basic} `bounded search' in Nonstandard Analysis allows us to `search through the real numbers' using the algorithm $\mathfrak{(A)}$ defined in Section \ref{main1}.  We follow Kohlenbach (\cite{kohlenbach2}*{p.\ 289}) in assuming that any sequence of type one can be viewed as a real using his `hat function'.  Intuitively speaking, we shall establish that Nonstandard Analysis allows us to treat \emph{number quantifiers as `one-dimensional' bounded searches} (as in \eqref{karf} and \eqref{tranke}), 
and \emph{set quantifiers as `two-dimensional' bounded searches} (as in \eqref{bound} and \eqref{bound6}).  
In particular, we will formulate \textsf{(PB)} which constitutes a similar `bounding result' as in \eqref{tranke} \emph{generalised to $\Pi_{1}^{1}$-formulas}.
In the same way as $\paai$ is essential to \eqref{tranke}, the principle $\Paai$ is essential in establishing \textsf{(PB)}:
\begin{align}\label{paaikestoemp2}\tag{$\Paai$}
(\forall^{\st}f^{1})\big[(\forall^{\st} g^{1})(\exists^{\st}x^{0})&f(\overline{g}x)\ne0 \asa (\forall g^{1})(\exists x^{0})f(\overline{g}x)\ne0\big]
\end{align}
What is more, by \cite{bennosam}*{Cor.\ 15} and Theorem \ref{labbekak}, $\Paai$ is equivalent to \textsf{(PB)}, and to the \emph{Suslin functional}, defined as follows:
\be\label{suske}
(\exists S^{2})(\forall f^{1})\big[   S(f)=_{0} 0 \asa (\exists g^{1})(\forall x^{0}) (f(\overline{g}x)=0).     \big] \tag{$S^{2}$}
\ee
The Suslin functional is the `hyperjump' functional and corresponds to $\FIVE$, the strongest so-called Big Five system from \emph{Reverse Mathematics} (See \cite{simpson2}*{VI}).

\medskip

Inspired by the results regarding the Suslin functional, we obtain a similar bounding result \textsf{(TB)} for $\Delta_{1}^{1}$-comprehension using $\Deltat$, i.e.\ the Transfer principle limited to $\Delta_{1}^{1}$-formulas.  
In particular, we obtain a version of \eqref{karf} for $\Delta_{1}^{1}$-formulas to underline the analogy between standard sets and nonstandard numbers.  

\medskip

As to methodology, inspired by the bounding result \eqref{tranke}, we shall require that the bounded formula (equivalent to the $\Pi_{1}^{1}$ or $\Delta_{1}^{1}$-formula at hand) in \textsf{(PB)}, \textsf{(TB)}, and related principles, is \emph{basic}, by which we mean that it satisfies the following:
\begin{enumerate}
\renewcommand{\theenumi}{\Roman{enumi}}
\item Only type 0 and constructive$^{\ref{footkie}}$ type $1$ objects occur in the bounded formula.\label{conda}
\item The syntactic structure of the bounded formula is similar to that of the original $\Pi_{1}^{1}$ or $\Delta_{1}^{1}$-formula.\label{condb}
\end{enumerate}
With regard to condition \eqref{condb}, $\Pi_{1}^{1}$-formulas can be brought into the \emph{Kleene normal form} (See \cite{simpson2}*{V.1.4}).  The latter can be gleaned from the Suslin functional and we will 
directly work with this normal form.  Furthermore, it is clear that the well-known practice of `coding sets of numbers as nonstandard numbers' (See e.g.\ \cite{keisler1}) is not basic in our sense, as we deal with equivalent \emph{bounded} formulas.     

\medskip

Finally, to obtain the aforementioned results, we need to adopt the richer framework of \emph{stratified} Nonstandard Analysis developed by Hrbacek (\cites{hrbacek3, hrbacek4, hrbacek5, aveirohrbacek}) and pioneered by P\'eraire (\cite{peraire}).  
We briefly introduce this framework in Section \ref{base}

\section{Nonstandard Analysis}\label{base}
In this section, we define the system in which we shall prove the equivalences mentioned in the previous section  
We first introduce Nelson's \emph{internal set theory} and a suitable subsystem $\RCAO$ thereof in Section \ref{firstbase}.  We then introduce \emph{stratified Nonstandard Analysis} and $\RCA_{0}^{\dagger}$, a suitable extension of $\RCAO$, in Section \ref{secondbase}.    
%
\subsection{Nelson's syntactic Nonstandard Analysis}\label{firstbase}
In Nelson's \emph{internal set theory} (\cite{wownelly}), a \emph{syntactic} approach to Nonstandard Analysis as opposed to Robinson's semantic one (\cite{robinson1}), a new predicate `st($x$)', read as `$x$ is standard' is added to the language of \textsf{ZFC}.  
The notations $(\forall^{\st}x)$ and $(\exists^{\st}y)$ are short for $(\forall x)(\st(x)\di \dots)$ and $(\exists y)(\st(y)\wedge \dots)$.
The three axioms \emph{Idealization}, \emph{Standard Part}, and \emph{Transfer} govern the new predicate `st'  and give rise to a conservative extension of \textsf{ZFC}.   
Nelson's approach has been studied in the context of higher-type arithmetic in e.g.\ \cite{brie, bennosam, avi3}, and we single out one particular system, called $\RCAO$.  

\medskip

In two words, the system $\RCAO$ is a conservative extension of Kohlenbach's base theory $\RCAo$ from \cite{kohlenbach2} with certain axioms from Nelson's {Internal Set Theory} based on the approach from \cites{brie,bennosam}.    
This conservation result is proved in \cite{bennosam}, while certain partial results are implicit in \cite{brie}.  In turn, the system $\RCAo$ is a conservative extension of the base theory of Reverse Mathematics $\RCA_{0}$ for the second-order language by \cite{kohlenbach2}*{Prop.\ 3.1}.  
Following Nelson's approach in arithmetic, we define $\RCAO$ as the system 
\[
\textsf{E-PRA}_{\st}^{\omega*}+\textsf{QF-AC}^{1,0} +\textsf{HAC}_{\textsf{int}}+\textsf{I}+ \textsf{PF-TP}_{\forall} 
\]
from \cite{bennosam}*{\S3.2-3.3}.  To guarantee that the latter is a conservative extension of $\RCAo$, Nelson's axiom \emph{Standard part} must be limited to \textsf{HAC}$_{\INT}$, 
while Nelson's axiom \emph{Transfer} has to be limited to universal formulas \emph{without} parameters, as in \textsf{PF-TP}$_{\forall}$.  
On a technical note, the language $\RCAO$ actually involves a predicate $\st_{\rho}$ for every finite type $\rho$, but the subscript is always omitted.  
\begin{thm}\label{kerre}
The system $\RCAO$ is a conservative extension of $\RCA_{0}^{\omega}$.  
The system $\RCAO$ is a $\Pi_{2}^{0}$-conservative extension of ${\PRA}$.  
\end{thm}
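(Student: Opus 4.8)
The plan is to prove the two conservation claims for $\RCAO$ by appealing to the corresponding facts for its constituent subsystems, since $\RCAO$ is defined as a specific combination of $\textsf{E-PRA}_{\st}^{\omega*}$ with $\textsf{QF-AC}^{1,0}$, $\textsf{HAC}_{\textsf{int}}$, $\textsf{I}$, and $\textsf{PF-TP}_{\forall}$, and the heavy lifting has already been carried out in \cite{bennosam}*{\S3.2--3.3}. For the first statement, I would invoke the main conservation theorem of \cite{bennosam} directly: the term-extraction / interpretation argument there (a variant of Nelson's reduction algorithm for internal set theory adapted to the higher-type arithmetical setting, in the style of \cite{brie}) shows that any sentence of the language of $\RCAo$ provable in $\RCAO$ is already provable in $\RCAo$. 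The point to emphasize is \emph{why} the restrictions in the definition are exactly what is needed: Standard Part is cut down to $\textsf{HAC}_{\INT}$ so that it can be eliminated by a functional interpretation without introducing choice strength beyond $\textsf{QF-AC}^{1,0}$, and Transfer is restricted to universal parameter-free formulas ($\textsf{PF-TP}_{\forall}$) precisely because unrestricted Transfer would encode $(\exists^{2})$ and destroy conservativity. So the first sentence is essentially a citation plus a remark that the hypotheses of that cited theorem are met by construction.

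For the second statement, I would chain two known conservation results. By \cite{kohlenbach2}*{Prop.\ 3.1}, $\RCAo$ is a conservative extension of the second-order base theory $\RCA_{0}$; and $\RCA_{0}$ is well known (see \cite{simpson2}) to be $\Pi_{2}^{0}$-conservative over $\PRA$. Combining these with the first statement of the present theorem — $\RCAO$ conservative over $\RCAo$ — yields that $\RCAO$ is $\Pi_{2}^{0}$-conservative over $\PRA$, since a $\Pi_{2}^{0}$-sentence in the language of arithmetic provable in $\RCAO$ is provable in $\RCAo$, hence in $\RCA_{0}$, hence in $\PRA$. One has to be slightly careful that the notion of $\Pi_{2}^{0}$-formula is stable under the passage between the (finite-type) languages of $\RCAO$ and $\RCAo$ and the second-order language — but arithmetical $\Pi_{2}^{0}$-sentences are literally the same formulas in all these languages, so no real issue arises.

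The only genuine subtlety, and the step I would flag as the main obstacle, is making sure the \emph{first} conservation claim is exactly the one proved in \cite{bennosam} rather than a slightly different variant: one must check that the axiom package $\textsf{E-PRA}_{\st}^{\omega*}+\textsf{QF-AC}^{1,0}+\textsf{HAC}_{\textsf{int}}+\textsf{I}+\textsf{PF-TP}_{\forall}$ used here matches the system for which the conservation theorem is stated there, including the technical convention that there is a separate predicate $\st_\rho$ at each finite type $\rho$. Granting that match — which is the reason $\RCAO$ was defined this way — both statements follow essentially formally, and no new argument is required beyond the citations; the theorem is recalled here as background rather than proved afresh.
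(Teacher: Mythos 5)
Your proposal is correct and matches the paper's approach: the paper's entire proof is a citation of \cite{bennosam}*{Cor.\ 9}, which covers both claims, and your first part is exactly that appeal. Your chaining of $\RCAo$-conservativity with Kohlenbach's \cite{kohlenbach2}*{Prop.\ 3.1} and the standard $\Pi_{2}^{0}$-conservativity of $\RCA_{0}$ over $\PRA$ is a valid (and routine) way to recover the second claim, so no genuinely different argument is involved.
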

\begin{proof}
See \cite{bennosam}*{Cor.\ 9}.  
\end{proof}
The conservation result for $\textsf{E-PRA}_{\st}^{\omega*}+\textsf{QF-AC}^{1,0}$ is trivial.  
Furthermore, omitting \textsf{PF-TP}$_{\forall}$, the theorem is implicit in \cite{brie}*{Cor.\ 7.6} as the proof of the latter goes through as long as \textsf{EFA} is available.

\medskip

The following theorem of $\RCAO$ is important.     
Note that the abbreviation `$M\in \Omega$' for $\neg\st(M^{0})$ is used.  The statement that \eqref{corkikiki} $\di$ \eqref{corkikiki2} for all such standard functionals is abbreviated $\Omega$-CA.  
If for a standard functional $F$, the functional $F(\cdot, M)$ satisfies \eqref{corkikiki}, we say the latter is \emph{$\Omega$-invariant}.  
\begin{thm}\label{genall}
In $\RCAO$, we have for all standard $F^{(\sigma\times 0)\di 0}$ that
\begin{align}
(\forall^{\st} x^{\sigma})(\forall M,N \in & \Omega)\big[F(x ,M)=F(x,N) \big]\label{corkikiki} \\
&\di (\exists^{\st}G^{\sigma\di 0})(\forall^{\st} x^{\sigma})(\forall N^{0}\in \Omega)\big[G(x)=_{0}F(x,N) \big].\label{corkikiki2}
\end{align}
\end{thm}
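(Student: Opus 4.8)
The plan is to extract the function $G$ from the hypothesis \eqref{corkikiki} by a Transfer-and-overspill argument, using the available fragment \textsf{PF-TP}$_{\forall}$ and the saturation principle \textsf{HAC}$_{\INT}$ present in $\RCAO$. First I would observe that \eqref{corkikiki} says exactly that the value $F(x,M)$ does not depend on which nonstandard $M$ we pick, for standard input $x$. The natural candidate for $G$ is therefore "$G(x):=F(x,M)$ for some/any nonstandard $M$", but this is not literally a legitimate definition inside the internal language, since $M$ is not a standard object; the work is precisely to show that this $x\mapsto F(x,M)$ can be replaced by a genuine \emph{standard} functional agreeing with it on all standard inputs.

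The key steps, in order. (1) Fix standard $F$ and assume \eqref{corkikiki}. Pick one nonstandard $M_{0}^{0}$ (available since $\RCAO$ proves there are infinitely large numbers). Consider the internal functional $H(x):=F(x,M_{0})$, which is a perfectly good internal object of type $\sigma\di 0$, though a priori not standard. (2) The crucial point is to produce a \emph{standard} $G$ with $(\forall^{\st}x^{\sigma})[G(x)=H(x)]$. Here I would invoke the relevant instance of Standard Part / \textsf{HAC}$_{\INT}$: for each standard $x$ there exists a standard number $y$ (namely $y=H(x)$) with $y=F(x,M_{0})$, and $\textsf{HAC}_{\INT}$ (applied to the formula $F(x,M_{0})=y$, after the usual manipulation to the required $\exists^{\st}$-form) yields a standard functional $\Phi$ such that for every standard $x$ some $y\in\Phi(x)$ satisfies $F(x,M_{0})=y$; since the witness is unique one then defines $G(x)$ to be that $y$, and $G$ is standard by construction. (3) It remains to verify the conclusion \eqref{corkikiki2} in the stated form, i.e.\ $(\forall^{\st}x)(\forall N\in\Omega)[G(x)=F(x,N)]$: for standard $x$ and arbitrary nonstandard $N$, use \eqref{corkikiki} to get $F(x,N)=F(x,M_{0})=H(x)=G(x)$. (4) Finally, $G$ being standard is the only thing \emph{new} we needed Nonstandard Analysis for; everything else is the chain of equalities above.

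The main obstacle I anticipate is step (2): legitimately passing from the internal, possibly-nonstandard functional $H(x)=F(x,M_{0})$ to a genuinely \emph{standard} $G$. One cannot simply apply Transfer to "$\st(H)$" because $H$ is defined using the nonstandard parameter $M_{0}$, so \textsf{PF-TP}$_{\forall}$ (parameter-free, universal Transfer) does not directly apply. The correct route is the saturation axiom \textsf{HAC}$_{\INT}$ (Nelson's Standard Part in the arithmetical guise of \cite{bennosam}): the predicate "$(\forall^{\st}x)(\exists^{\st}y)[F(x,M_{0})=y]$" is of the right syntactic shape, and \textsf{HAC}$_{\INT}$ delivers a standard functional bounding the witnesses, from which uniqueness of the witness $y=F(x,M_{0})$ lets us read off a standard $G$. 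One must be slightly careful that $\sigma$ may be a higher type, so the input $x$ ranges over a higher type; but \textsf{HAC}$_{\INT}$ is stated for arbitrary finite types in $\RCAO$, so this causes no difficulty. Once step (2) is in place, the verification of \eqref{corkikiki2} is the routine transitivity-of-equality argument sketched in step (3).
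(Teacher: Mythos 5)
Your overall strategy (freeze one nonstandard $M_{0}$, set $H(x):=F(x,M_{0})$, and use $\textsf{HAC}_{\INT}$ to replace it by a standard functional) is the right kind of argument, but step (2) contains two genuine gaps. First, the antecedent of your $\textsf{HAC}_{\INT}$ application, $(\forall^{\st}x^{\sigma})(\exists^{\st}y^{0})(F(x,M_{0})=y)$, is precisely the claim that $F(x,M_{0})$ is standard for standard $x$, and you assert it without proof (``namely $y=H(x)$''). This does not follow from closure of `st' under application, since $M_{0}$ is nonstandard, and Transfer with parameters is not available in $\RCAO$. It does follow from \eqref{corkikiki}, but only via an underspill argument: the internal formula $(\forall m,m'\geq n)(F(x,m)=F(x,m'))$ holds for every nonstandard $n$, hence by underspill (provable from idealization \textsf{I}) for some standard $n_{0}$, so $F(x,M_{0})=F(x,n_{0})$ is standard. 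This is a substantial part of the content of the theorem and must be supplied, not assumed.

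Second, and more seriously, the extraction of $G$ fails as stated. $\textsf{HAC}_{\INT}$ only yields a standard $\Phi^{\sigma\di 0^{*}}$ with $(\forall^{\st}x)(\exists y\in\Phi(x))(y=F(x,M_{0}))$; to ``define $G(x)$ to be that $y$'' you must select, by an internal term, the unique element of the finite list $\Phi(x)$ equal to $F(x,M_{0})$, and that selection reintroduces the nonstandard parameter $M_{0}$, so the resulting functional is \emph{not} standard by construction (standardness of a defined object comes from its being a term in standard parameters). Uniqueness of the witness does not by itself produce a standard selector. The natural repair is to Herbrandize thresholds rather than values: from \eqref{corkikiki} and underspill one gets $(\forall^{\st}x)(\exists^{\st}n)(\forall m\geq n)(F(x,m)=F(x,n))$, whose matrix has only the standard parameter $F$; applying $\textsf{HAC}_{\INT}$ gives a standard $\Phi$, and $G(x):=F(x,\max_{i<|\Phi(x)|}\Phi(x)(i))$ is a term in the standard objects $F,\Phi$, hence standard, and \eqref{corkikiki} then yields $G(x)=F(x,N)$ for all $N\in\Omega$ and standard $x$, i.e.\ \eqref{corkikiki2}. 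This route also sidesteps the delicate question of whether $\textsf{HAC}_{\INT}$ may legitimately be instantiated with the nonstandard parameter $M_{0}$, a point your write-up leaves unexamined. (The paper itself gives no proof here, citing \cite{firstHORM}, so the comparison above is with the argument that reference-style proof would require.)
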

\begin{proof}
See \cite{firstHORM}*{\S2}.  
\end{proof}
\noindent
The `base theory' $\RCAO$ is quite useful in establishing equivalences, as is clear from the following theorem, which also establishes that the omission of parameters in \textsf{PF-TP}$_{\forall}$ is necessary (for obtaining a conservative extension as in Theorem \ref{kerre}).
\begin{thm}\label{markje}
The system $\RCAO$ proves $\paai\asa (\exists^{2})$.  Adding $\textup{\textsf{QF-AC}}^{1,1}$, we obtain $(S^{2})\asa \Paai$.  
\end{thm}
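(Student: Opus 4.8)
The plan is to prove the two equivalences separately, exploiting the close formal analogy between the pairs $\bigl((\exists^{2}),\ \paai\bigr)$ and $\bigl((S^{2}),\ \Paai\bigr)$. For the first equivalence, $\paai \asa (\exists^{2})$, I would first show $(\exists^{2})\di\paai$. Given the Turing jump functional $\varphi^{2}$ witnessing $(\exists^{2})$, and a standard $f^{1}$ with $(\forall^{\st}x)f(x)\ne0$, the point is that $\varphi(f)\ne0$ is an \emph{internal} statement about $f$, while $(\forall^{\st}x)f(x)\ne0$ together with the defining property of $\varphi$ forces $\varphi(f)\ne0$; then $(\forall x)f(x)\ne0$ follows again from the defining property of $\varphi$. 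The only subtlety is that $\varphi$ is not assumed standard, but $(\exists^{2})$ is an internal statement, so one can apply it to the (possibly nonstandard) $f$ directly; no Transfer is needed in this direction. For the converse $\paai\di(\exists^{2})$, I would argue that $\paai$ lets one define the required $\varphi^{2}$ via a bounded search with a nonstandard upper bound, exactly as in \eqref{tranke}: put $\varphi(f):=$ ``$0$ if $(\mu k\le M)f(k)=0$ exists, $1$ otherwise,'' for a fixed infinite $M$. Here the work is to check that this $\varphi$ is independent of the choice of $M$ — i.e.\ that $\varphi(\cdot,M)$ is $\Omega$-invariant in the sense of \eqref{corkikiki} — so that Theorem~\ref{genall} ($\Omega$-CA) produces a genuine standard functional $G$, and then $\paai$ (applied to the standard $f$ built from the data) guarantees $G$ satisfies the defining equivalence of $(\exists^{2})$ for \emph{all} $f^{1}$, standard or not, by a further Transfer-type argument packaged in $\paai$. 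The reference \cite{bennosam}*{Cor.\ 12} already records this equivalence, so at worst I can cite it; but I would prefer to sketch the $\Omega$-invariance check since it is the template for the harder half.

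For the second part, working in $\RCAO+\QFAC^{1,1}$, the proof of $(S^{2})\asa\Paai$ runs along the same lines with `$\exists x\ f(\overline{g}x)=0$' replacing `$\exists x\ f(x)=0$' and an outer quantifier over $g^{1}$ replacing a single number search — i.e.\ the ``two-dimensional bounded search'' alluded to in the introduction. For $(S^{2})\di\Paai$: given the Suslin functional $S^{2}$ and a standard $f^{1}$ with $(\forall^{\st}g)(\exists^{\st}x)f(\overline{g}x)\ne0$, one wants $(\forall g)(\exists x)f(\overline{g}x)\ne0$, equivalently $S(f)\ne0$ (unwinding the defining equivalence of $(S^{2})$, which says $S(f)=0$ iff the tree $\{\sigma:f(\sigma)=0\}$ has an infinite path). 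The hypothesis says this tree has no \emph{standard} infinite path; one must upgrade this to: no infinite path at all. This is where $\QFAC^{1,1}$ enters — it is needed to extract, from the standard-path-freeness, a standard bound/rank function on the well-founded part of the tree, which then by Transfer (available because the relevant statement is now suitably low-complexity with the rank function as a standard parameter) applies to all branches. Concretely I would contrapose: if $S(f)=0$ then there is a path $g$, and $\QFAC^{1,1}$ plus the internal machinery lets one replace it by a standard path, contradicting the hypothesis. For the converse $\Paai\di(S^{2})$: mirror the first part's construction, defining $S(f)$ by the ``basic'' bounded formula (the Kleene-normal-form search: check whether $f(\overline{g}x)=0$ for all $x\le M$ along all $g$ coded by nonstandard-length sequences, bounded by a nonstandard $M$), verify $\Omega$-invariance so that $\Omega$-CA yields a standard $S$, and then use $\Paai$ to push the defining equivalence from standard $f$ to all $f^{1}$.

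The main obstacle is the direction $(S^{2})\di\Paai$ (and its analogue is already nontrivial in the $\paai$ case): the hypothesis of $\Paai$ only controls the behaviour of $f$ on \emph{standard} $g$, so to conclude the fully internal statement $(\forall g)(\exists x)f(\overline{g}x)\ne0$ one needs a genuine Transfer step, and $\RCAO$ only has parameter-free universal Transfer (\textsf{PF-TP}$_{\forall}$) built in. The role of $\QFAC^{1,1}$ is precisely to convert the $\Pi^{1}_{1}$ content into a form where the standard witness (a rank or an explicit path) is itself a standard object, so that the available Transfer suffices; getting this reduction right — essentially re-deriving that $\QFAC^{1,1}$ upgrades parameter-free Transfer to $\Pi^{1}_{1}$-Transfer in the presence of $(S^{2})$ — is the technical heart of the argument, and it is exactly the point where \cite{bennosam}*{Cor.\ 15} is invoked. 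I expect the cleanest write-up to isolate the $\Omega$-invariance lemma once (it is identical in shape for both equivalences) and then to treat the Transfer-upgrade as the single genuinely new computation.
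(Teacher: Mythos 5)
Your sketch of the first direction, $(\exists^{2})\di\paai$, contains a genuine error rather than a mere gap. The defining property of $\varphi$ is $\varphi(f)=0\asa(\exists x^{0})f(x)=0$; from the hypothesis $(\forall^{\st}x)f(x)\ne0$ you cannot conclude $\varphi(f)\ne0$, because $f$ might have only \emph{nonstandard} zeros --- ruling that out is precisely the statement $\paai$ you are trying to prove, so the step ``forces $\varphi(f)\ne0$'' is circular. Your explicit claim that ``no Transfer is needed in this direction'' is the opposite of what actually makes the argument work: the point is that $(\exists^{2})$ (equivalently $(\mu^{2})$) is a \emph{parameter-free} existential sentence, so the contraposition of \textsf{PF-TP}$_{\forall}$ allows one to assume the witnessing functional is \emph{standard}; then, by closure of standardness under application (cf.\ the \textsf{BASIC} axioms), $\mu(f)$ is a standard zero whenever the standard $f$ has any zero at all. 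This is exactly the mechanism displayed in the paper's proof of Theorem~\ref{lreall}, and it is also where $\QFAC^{1,1}$ really enters in the second part: it is used to pass from $(S^{2})$ to the witnessing functional \eqref{forex} (i.e.\ $(\mu_{1})$), which is again parameter-free and can then be standardised by \textsf{PF-TP}$_{\forall}$ --- not, as you suggest, to ``replace an internal path by a standard path'' or to extract rank functions.

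A second gap concerns the converse directions: your $\Omega$-invariance argument only yields a standard $G$ (resp.\ $S$) satisfying the defining equivalence \emph{for standard} $f$, i.e.\ a relativised version $(\exists^{2})^{\st}$; the passage to the full internal sentence is not ``a further Transfer-type argument packaged in $\paai$'', since $\paai$ only transfers $\Pi^{0}_{1}$ formulas with standard parameters, whereas the defining equivalence quantifies over all $f^{1}$ and involves the functional as a parameter. That step needs a separate argument (in \cite{bennosam} it is part of the proofs of Cor.\ 12 and 15; compare the remark in the paper that $(S^{2})\asa(S^{2})^{\st}$ is established there). For the record, the paper proves Theorem~\ref{markje} simply by citing \cite{bennosam}*{Cor.\ 12 and 15}, so citing those results, as you offer to do, is acceptable; but the sketch as written does not constitute a correct proof of either equivalence.
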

\begin{proof}
By \cite{bennosam}*{Cor.\ 12 and 15}.  
\end{proof}
Finally, the following theorem establishes that restricting the Standard Part principle as in \textsf{HAC}$_{\textup{int}}$ is necessary (for obtaining a conservative extension as in Theorem \ref{kerre}).
Let \textsf{WKL} be \emph{weak K\"onig's lemma} as in \cite{simpson2}*{IV} and let \eqref{STP} be 
\be\label{STP}\tag{\textup{\textsf{STP}}}
(\forall X^{1})(\exists^{\st} Y^{1})(\forall^{\st} x^{0})(x\in X\asa x\in Y).
\ee
Note that $\Omega$-CA for $\sigma=1$ is a version of the Standard Part Principle \eqref{STP}.
\begin{thm}
In $\RCAO+\eqref{STP}$, we have $\WKL$.
\end{thm}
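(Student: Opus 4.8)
The plan is to reduce $\WKL$ to its Kohlenbach-style functional version, which in $\RCAo$ is equivalent to a form involving a functional selecting a path through an infinite binary tree, and then to manufacture such a path using \eqref{STP} together with $\Omega$-CA. First I would fix an infinite binary tree $T\subseteq 2^{<\N}$, coded (as usual in $\RCAo$) by a type-one object. By internal induction inside $\RCAo$ one shows that $T$ has nodes of every \emph{standard} length; more precisely, for each standard $n$ there is a node $\sigma\in T$ with $|\sigma|=n$, since $T$ is infinite. The key observation is that the statement ``$\sigma$ is a node of $T$ of length $n$'' is (after coding) a bounded, hence $\Delta_{1}^{0}$, matter: for each length $n$ define $b(n,T,M)$ to be the least code $\sigma\le M$ with $|\sigma|=n$ and $\sigma\in T$, if one exists below the nonstandard bound $M$, and $M$ otherwise. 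Because $T$ is infinite and codes of short strings are small, $b(n,T,M)$ is independent of the choice of nonstandard $M$, i.e.\ it is $\Omega$-invariant; this is exactly the situation of \eqref{corkikiki}, so $\Omega$-CA (equivalently, the $\sigma=1$ instance available from \eqref{STP}, or Theorem~\ref{genall}) yields a standard function $n\mapsto \beta(n)$ with $\beta(n)=b(n,T,M)$ for all nonstandard $M$ and all standard $n$.

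Next I would verify that $\beta$ encodes a path. For standard $n$, the value $\beta(n)$ is a genuine node of $T$ of length $n$; monotonicity under the prefix relation does not come for free from the ``least code'' choice, so instead I would build the path level by level: using $\Omega$-CA on the functional that, given the finite string already chosen and a bound $M$, extends it by the least bit $i\in\{0,1\}$ for which the extended string still has infinitely many successors in $T$ \emph{below $M$}. Infinitude of $T$ guarantees at least one such $i$ exists for every standard length, and the ``below $M$'' count is again $\Omega$-invariant because it stabilises once $M$ is nonstandard. The resulting standard object $g^{1}\in 2^{\N}$ then satisfies $\overline{g}n\in T$ for every standard $n$. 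Finally, \textsf{PF-TP}$_{\forall}$ (Transfer for parameter-free universal formulas) — or, more directly, the fact that ``$(\forall n)(\overline{g}n\in T)$'' is a $\Pi_{1}^{0}$ statement about the standard object $g$ and $T$, which by \eqref{STP}-driven availability of $\paai$-type reasoning holds for all $n$ once it holds for all standard $n$ — upgrades this to $(\forall n^{0})(\overline{g}n\in T)$, i.e.\ $g$ is an honest path through $T$. Since $T$ was an arbitrary infinite binary tree, this gives $\WKL$.

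The main obstacle I expect is the passage from ``$\beta$ chooses something correct at every \emph{standard} level'' to ``$g$ is a path at \emph{all} levels'': $\Omega$-CA only delivers a standard functional whose values agree with the $\Omega$-invariant functional on \emph{standard} inputs, so without a Transfer step one only obtains a path ``up to standard length.'' In $\RCAO$ full Transfer is unavailable, so I would need to check carefully that the relevant universal statement is parameter-free in the sense required by \textsf{PF-TP}$_{\forall}$ — this is delicate because $T$ is a genuine parameter — or else argue that the hat-function/coding conventions of \cite{kohlenbach2} reduce $\WKL$ to a parameter-free schema. An alternative route, probably cleaner, is to observe that $\beta$ itself, being standard and taking node-values of every standard length, \emph{already} witnesses the functional form of $\WKL$ relativised to the standard world, and then invoke the conservativity of $\RCAO$ over $\RCAo$ (Theorem~\ref{kerre}) in reverse: one extracts from the nonstandard argument an internal proof in $\RCAo$ of the functional $\WKL$. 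I would present whichever of these two is shorter, but flag the Transfer/parameter issue as the one point needing genuine care.
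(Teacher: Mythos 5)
Your first half (building a standard path through every \emph{standard} level of a \emph{standard} infinite tree, i.e.\ $\WKL^{\st}$) is the right intermediate target, but the particular construction has an unjustified invariance claim, and the paper reaches the same point more simply. Your level-by-level rule ``extend by the least bit $i$ such that $\sigma * i$ still has many extensions in $T$ below $M$'' is \emph{not} $\Omega$-invariant: for nonstandard $M_{0}<M$ it can happen that $\sigma*0$ has extensions in $T$ up to exactly length $M_{0}$ and no further, while $\sigma*1$ carries the genuinely infinite part, so the selected bit changes as the bound passes $M_{0}$. Deciding which immediate successor heads an infinite subtree is precisely the jump-like step in K\"onig's lemma, and it does not become $\Omega$-invariant for free. (Also, Theorem~\ref{genall} applies only to \emph{standard} $F$, so $T$ must be standard throughout your construction.) The paper's route avoids all of this: by overspill an infinite standard tree contains a node of \emph{nonstandard} length, and a single application of \eqref{STP} to that one sequence yields a standard path, hence $\WKL^{\st}$.

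The real crux -- passing from $\WKL^{\st}$ to internal $\WKL$ for \emph{all} trees -- you correctly flag but do not close, and neither of your proposed routes works. Transfer for $\Pi^{0}_{1}$-formulas with parameters ($\paai$) is \emph{not} available here: by Theorem~\ref{markje} it is equivalent to $(\exists^{2})$, whereas $\RCAO+\eqref{STP}$ is conservative over $\WKL_{0}$; so ``the $\Pi^{0}_{1}$ statement about $g$ and $T$ holds for all $n$ once it holds for all standard $n$'' is not a legitimate step. Likewise, conservativity of $\RCAO$ over $\RCAo$ (Theorem~\ref{kerre}) cannot be ``invoked in reverse'': $\WKL$ is not provable in $\RCAo$, and a conservation theorem for the base system says nothing about what $\RCAO+\eqref{STP}$ proves. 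The paper's actual move is a specific syntactic manoeuvre you would need to reproduce: contrapose $\WKL^{\st}$ into the fan theorem, apply \textsf{QF-AC}$^{1,0}$ relative to `st' (available from \textsf{HAC}$_{\textup{int}}$) to the antecedent, drop the `st' in the antecedent and consequent of the innermost implication, and only then apply \textsf{PF-TP}$_{\forall}$. This is legitimate because the universally quantified \emph{sentence} (quantifying over all trees) is parameter-free; your worry that ``$T$ is a genuine parameter'' misidentifies the obstacle, which is rather that $\WKL$ as stated is not a purely universal sentence until the contraposition-plus-choice step has eliminated the existential path quantifier.
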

\begin{proof}
See \cite{firstHORM}*{\S5}.  By way of a sketch, a standard binary tree with sequences of any standard length also contains a sequence of nonstandard length (by overspil or induction).  Apply \eqref{STP} to the latter sequence to obtain a standard path through the tree, and hence $\WKL^{\st}$.  
Rewrite $\WKL^{\st}$ as its contraposition, sometimes called \emph{fan theorem}, and apply \textsf{QF-AC}$^{1,0}$ relative to `st' (which follows from \textsf{HAC}$_{\textup{int}}$) to the antecedent.  Drop all `\st' in the antecedent and consequent of the innermost implication, and apply \textsf{PF-TP}$_{\forall}$ to yield \textsf{WKL}.    
\end{proof}
By \cites{keisler1, briebenno}, \eqref{STP} actually yields a conservative extension of $\WKL_{0}$ from \cite{simpson2}*{IV}.  

\subsection{Stratified Nonstandard Analysis}\label{secondbase}
The framework of  \emph{Stratified Nonstandard Analysis} (\cite{hrbacek3, hrbacek4, hrbacek5, aveirohrbacek, peraire}) is a refinement of Nelson's where the unary standardness predicate `$\st(x)$' is replaced by the binary predicate `$x\rel y$', read as `$x$ is standard relative to $y$' and $x\rel 0$ is still read `$x$ is standard' or `st$(x)$'.    
We denote $\neg(y\rel x)$ by $x\srel y$ and say that `$y$ is nonstandard relative to $x$'.  

\medskip

In the same way, extend the language of $\RCAO$ with new predicates $\rel_{\rho, \tau}$, one for each pair of finite types.  We will often omit the subscript as is common for the standardness predicate of $\RCAO$.  
The axioms of $\RCAO$ govern the predicate $\st(x)$, whereas the following basic axioms govern $x\rel y$.  
\begin{ax}[\textsf{BASIC}]~ 
\begin{enumerate}
\renewcommand{\theenumi}{\roman{enumi}}
\item $(\forall x)[\st(x)\asa x\rel 0]$.
\item $(\forall x)(0\rel x \wedge x\rel x)$.  
\item $(\forall x, y,z )\big[ (x\rel y\wedge y\rel z)   \di x\rel z \big]$.\label{three}
\item $(\forall x)(\exists y^{0},z^{0})(x\srel y \wedge x\rel z)$.
\item $(\forall x^{\sigma\di \tau}, y^{\sigma},z)(x,y\rel z\di x(y)\rel z)$.\label{final}
\end{enumerate}
\end{ax}    
\noindent
The \textsf{BASIC} axioms are rather elementary and express the following facts:
\begin{enumerate}
\renewcommand{\theenumi}{\roman{enumi}}
\item Being standard is the same as being standard relative to zero.
\item All objects are standard relative to themselves.  Zero is the `least' level of standardness.
\item Transitivity holds for `being standard relative to'.
\item Every level of standardness is inhabited by a number.  There always exists a similarly inhabited higher level.  
\item Functional application preserves relative standardness.  
\end{enumerate}

Denote $\RCA_{0}^{\dagger}$ as $\RCAO+\textsf{BASIC}$ in the language extended by `$\rel$'.
Clearly $\RCA_{0}^{\dagger}$ is only a definitional extension of $\RCAO$, i.e.\ the former is also a conservative extension of $\RCA_{0}$ and $\PRA$ similar to Theorem \ref{kerre}.    
We also require the following {Standard part principle}, not stronger than \eqref{STP}.  
\be\label{S}\tag{{\textup{\textsf{STP2}}}}
(\forall X^{1},z)(\exists Y^{1}\rel z)(\forall x^{0}\rel z)(x\in Y\asa x\in X).
\ee
We finish this section with an important remark about the internal framework.  
\begin{rem}\label{ohdennenboom}\rm
Tennenbaum's theorem (\cite{kaye}*{\S11.3}) `literally' states that any nonstandard model of \textsf{PA} is not computable.  \emph{What is meant} is that for a nonstandard model $\M$ of \textsf{PA}, the operations $+_{\M}$ and $\times_{\M}$ cannot be computably defined in terms of the operations $+_{\N}$ and $\times_{\N}$ of the standard model $\N$ of \textsf{PA}.  

\medskip

While Tennenbaum's theorem is of interest to the \emph{semantic} approach to Nonstandard Analysis involving nonstandard models, $\RCAO$ is based on Nelson's \emph{syntactic} framework, and therefore Tennenbaum's theorem does not apply:  Any attempt at defining the (external) function `$+$ limited to the standard numbers' is an instance of \emph{illegal set formation}, forbidden in Nelson's \emph{internal} framework (\cite{wownelly}*{p.\ 1165}).  

\medskip

To be absolutely clear, lest we be misunderstood, Nelson's \emph{internal set theory} \textsf{IST} forbids the formation of \emph{external} sets $\{x\in A: \st(x)\}$ and functions `$f(x)$ limited to standard $x$'.  
Therefore, any appeal to Tennenbaum's theorem to claim the `non-computable' nature of $+$ and $\times$ from $\RCAO$ is blocked, for the simple reason that the functions `$+$ and $\times$ limited to the standard numbers' \emph{simply do not exist}.              
On a related note, we recall Nelson's dictum from \cite[p.\ 1166]{wownelly} as follows:
\begin{quote}
\emph{Every specific object of conventional mathematics is a standard set.} It remains unchanged in the new theory \textup{[\textsf{IST}]}.  
\end{quote}
In other words, the operations `$+$' and `$\times$', but equally so primitive recursion, in (subsystems of) \textsf{IST}, are \emph{exactly the same} familiar operations we  
know from (subsystems of) \textsf{ZFC}.  Since the latter is a first-order system, we however cannot exclude the presence of nonstandard objects, and internal set theory just makes this explicit, i.e.\ \textsf{IST} turns a supposed bug into a feature.    
\end{rem}

\section{Main results}\label{main}
\subsection{Stratified bounding and the Suslin functional}\label{main1}
In this section, we formulate the bounding principle \textsf{(PB)} and prove its equivalence to the Suslin functional.
We also prove that \textsf{(PB)} gives rise to the algorithm $\mathfrak{(A)}$ for finding witnesses to $\Sigma_{1}^{1}$-formulas relative to the standard world.  

\medskip

First of all, we prove some `relative' versions of the Transfer principle.
\begin{thm}\label{lreall}
In $\RCA_{0}^{\dagger}$, $\paai$ is equivalent to 
\[
(\forall z)(\forall f^{1}\rel z)\big[(\forall x^{0}\rel z)f(x)\ne0\di (\forall x)f(x)\ne0\big],
\]
and $\Paai$ is equivalent to 
\[
(\forall z)(\forall f^{1}\rel z)\big[(\forall  g^{1}\rel z)(\exists x^{0})f(\overline{g}x)\ne0 \leftrightarrow (\forall g^{1})(\exists x^{0})f(\overline{g}x)\ne0\big]
\]
\end{thm}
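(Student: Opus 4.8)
The plan is to derive each `relative' formulation from the parameter-free Transfer principle it is paired with by a short appeal to \textsf{BASIC}\eqref{final} (functional application preserves relative standardness), and to obtain the converse for free by specialising the relative formulation to $z:=0$. For the latter direction, instantiating the displayed relative statements at $z:=0$ and using \textsf{BASIC}(i), which says $x\rel 0\asa \st(x)$, turns them verbatim into $\paai$ and $\Paai$ — except that the innermost quantifier of the $\Pi^{1}_{1}$-statement reads $(\exists x^{0})$ rather than $(\exists^{\st}x^{0})$. But $\Pi_{1}^{0}$-formulas are a special case of the Kleene normal form, so the relative $\Pi_{1}^{1}$-statement already entails $\paai$, and encoding `$h(x)=0$' as `$f(x)\ne0$' and applying $\paai$ to the standard function $x\mapsto f(\overline gx)$ upgrades $(\exists x^{0})$ to $(\exists^{\st}x^{0})$ for standard $g$. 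So nothing beyond $\RCA_{0}^{\dagger}$ is used here.

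\emph{The content direction for $\paai$.} By Theorem~\ref{markje}, $\paai$ yields $(\exists^{2})$; moreover this can be taken with a \emph{standard} witness, obtained by feeding the functional $F(f,M)$ — equal to $0$ if $(\exists k\le M)f(k)=0$ and $1$ otherwise, which is $\Omega$-invariant on standard $f$ — into $\Omega$-CA (Theorem~\ref{genall}) and then applying \textsf{PF-TP}$_{\forall}$ to extend the resulting standard $G$ from standard $f^{1}$ to all $f^{1}$. From such a standard $G$ one gets, by a term of $\RCAo$, a standard Feferman operator $\mu^{2}$ with $(\exists n)f(n)=0\to[f(\mu f)=0\wedge(\forall k<\mu f)f(k)\ne0]$. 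Now fix $z$ and $f^{1}\rel z$ with $(\forall x^{0}\rel z)f(x)\ne0$: since $\mu$ is standard, $\mu\rel 0\rel z$ by \textsf{BASIC}(ii)--(iii), hence $\mu(f)\rel z$ by \textsf{BASIC}\eqref{final}; were there a zero of $f$ at all, $\mu(f)$ would be one standard relative to $z$, contradicting the hypothesis. Thus $(\forall x)f(x)\ne0$, which is the missing implication of the relative $\paai$.

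\emph{The content direction for $\Paai$.} This is entirely parallel, with the Kleene normal form replacing the $\mu$-operator. Theorem~\ref{markje} (with \textsf{QF-AC}$^{1,1}$) gives the Suslin functional $(S^{2})$, again — via the same $\Omega$-CA plus \textsf{PF-TP}$_{\forall}$ bootstrap — as a \emph{standard} functional, and hence a standard operator sending $f^{1}$ to the leftmost branch of the tree $T_{f}:=\{\sigma^{0}:(\forall i\le|\sigma|)f(\overline\sigma i)=0\}$ whenever the latter is ill-founded. For $z$ and $f^{1}\rel z$, the map $f\mapsto T_{f}$ is a standard functional, so $T_{f}\rel z$ by \textsf{BASIC}\eqref{final}, and the same axiom puts its leftmost branch at level $z$; hence whenever $(\exists g^{1})(\forall x)f(\overline gx)=0$ there is such a $g\rel z$, which is the missing implication of the relative $\Paai$.

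The main obstacle throughout is the word \emph{standard}: \textsf{BASIC}\eqref{final} relativizes only $\rel 0$ objects, so one must genuinely extract the Turing jump, $\mu^{2}$, $(S^{2})$ and the leftmost-branch operator as standard functionals — not merely as the a priori existent functionals furnished by $(\exists^{2})$ and $(S^{2})$ — which is exactly where $\Omega$-CA, \textsf{PF-TP}$_{\forall}$ and (for the $\Pi_{1}^{1}$ case) \textsf{QF-AC}$^{1,1}$, needed to turn the $\Sigma_{1}^{1}$ search governing the leftmost branch into a shape to which the $\Omega$-CA bootstrap applies, do the real work. A secondary point to keep in check is that only the plain, `st'-versions of these auxiliary principles, applied to those plainly standard functionals, are invoked, so that no unavailable relative Transfer or relative Standard Part is smuggled into the argument.
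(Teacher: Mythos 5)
Your skeleton matches the paper's: trivial direction by $z:=0$ plus \textsf{BASIC}, hard direction by exhibiting a \emph{standard} witnessing functional ($\mu$-operator, resp.\ a branch-finding operator) and then relativising with \textsf{BASIC}(iii) and (v). Your treatment of the $z:=0$ direction (noting the unrelativised inner $(\exists x^{0})$ and repairing it via an encoding plus $\paai$) is fine, indeed more careful than the paper's one-line remark. The problem is the mechanism by which you secure standardness of the functionals, which is exactly the crux you yourself identify. The $\Omega$-\textsf{CA} bootstrap only yields a standard $G$ that is correct \emph{on standard inputs}, and your next step --- ``applying \textsf{PF-TP}$_{\forall}$ to extend the resulting standard $G$ from standard $f^{1}$ to all $f^{1}$'' --- is not a legitimate application of that axiom: \textsf{PF-TP}$_{\forall}$ is Transfer for universal formulas \emph{without parameters}, and the formula $(\forall f^{1})[G(f)=0\asa(\exists k)f(k)=0]$ contains the parameter $G$, standard or not. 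The paper explicitly stresses (around Theorems \ref{kerre} and \ref{markje}) that admitting even standard parameters in \textsf{PF-TP}$_{\forall}$ destroys the conservation result, so no principle of $\RCA_{0}^{\dagger}$ lets you pass from ``correct on standard $f$'' to ``correct on all $f$'' (or on all $f\rel z$) for a fixed standard $G$. The repair is the paper's actual move: from $\paai$ (resp.\ $\Paai$) first derive the \emph{parameter-free internal} sentences $(\mu^{2})$ (resp.\ $(\mu_{1})$, i.e.\ \eqref{forex}) via \cite{bennosam}*{Cor.~12, 15}, and only then apply the contraposition of \textsf{PF-TP}$_{\forall}$ to these sentences, obtaining a standard witness that is correct for \emph{all} $f$; \textsf{BASIC}(iii) and (v) then finish exactly as you intend.

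Two further points in the $\Pi^{1}_{1}$ half. First, your route goes through $(S^{2})$ and hence through \textup{\textsf{QF-AC}}$^{1,1}$ (you say so yourself), but Theorem \ref{lreall} is stated over $\RCA_{0}^{\dagger}$ alone; the paper avoids this by working with $(\mu_{1})$, which is equivalent to $\Paai$ \emph{without} \textup{\textsf{QF-AC}}$^{1,1}$, whereas $(S^{2})$ needs it. Second, and similarly, extracting a total leftmost-branch operator (or, in the arithmetical case, a Feferman $\mu$ from the characteristic functional $G$) is not achieved ``by a term of $\RCAo$'' alone: the usual derivations use \textup{\textsf{QF-AC}}$^{1,0}$/\textup{\textsf{QF-AC}}$^{1,1}$, whose witnesses are not automatically standard, so standardness would again be lost at precisely the step where you need it. Both issues disappear once the standard witness is obtained by applying \textsf{PF-TP}$_{\forall}$ (contraposed) directly to the parameter-free sentences $(\mu^{2})$ and $(\mu_{1})$.
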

\begin{proof}
Clearly, the reverse implications follow from \textsf{BASIC} and taking $z=0$.
By \cite{bennosam}*{Cor.\ 12}, $\paai$ is equivalent to the following sentence:
\be\tag{$\mu^{2}$}
(\exists \mu^{2})(\forall f^{1})\big[(\exists x^{0})f(x)=0\di f(\mu(f))=0\big].
\ee
Since $(\mu^{2})$ does not involve parameters, we may apply (the contraposition of) \textsf{PF-TP}$_{\forall}$ to $(\mu^{2})$, and hence assume that $\mu$ as in $(\mu^{2})$ is standard.  
Thus, we have $\mu \rel z $ for any $z$ by axiom \eqref{three} in \textsf{BASIC}.  By axiom \eqref{final} in the latter, for any $f\rel z$, we have $\mu(f)\rel z$.  
By the definition of $(\mu^{2})$, if $(\exists x^{0})f(x)=0$, then $(\exists x^{0}\rel z)f(x)=0$, which is what we needed to prove for $\paai$.  One proceeds in exactly the same way for $\Paai$, as the latter is equivalent to 
\be\label{forex}
(\exists \nu^{1\di 1})(\forall f^{1})\big[(\exists g^{1})(\forall x^{0})(f(\overline{g}x)=0)\di (\forall x^{0})(f(\overline{\nu(f)}x)=0) \big],
\ee 
by \cite{bennosam}*{Cor.\ 15}, and we are done.
\end{proof}\noindent
The functional \eqref{forex} is called $(\mu_{1})$ in \cite{avi2}, and is equivalent to $(S^{2})$ assuming \textsf{QF-AC}$^{1,1}$.   
It is clear that $(\mu_{1})$ and $(S^{2})$ do not satisfy either of the conditions \eqref{conda} and \eqref{condb}.  

\medskip

Secondly, we consider an important consequence of the idealization axiom \textsf{I}.  
\begin{thm}\label{hake}
In $\RCAO$, there is a \(nonstandard\) function $h_{0}^{1}$ which dominates all standard $f^{1}$ everywhere, i.e.\ $(\forall^{\st}f^{1})(\forall n^{0})(f(n)\leq_{0}h_{0}(n))$ or  $(\forall^{\st}f^{1})(f\leq_{1} h_{0})$ .
\end{thm}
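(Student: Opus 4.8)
The plan is to invoke the idealization axiom \textsf{I} of $\RCAO$ directly. Recall that in the Nelson-style framework of \cites{brie,bennosam}, \textsf{I} allows one to extract, for any internal relation $\varphi$, an object simultaneously satisfying $\varphi$ against every \emph{finite} tuple of standard parameters, provided such finite-witnessing holds. So first I would write down the internal relation
\be\label{plankie}
\varphi(h,f)\equiv (\forall n^{0})(f(n)\leq_{0} h(n)),
\ee
and observe that for every \emph{finite} sequence $\langle f_{0},\dots,f_{k}\rangle$ of type-$1$ objects, the function $h(n):=\max_{i\le k}f_{i}(n)$ is a genuine type-$1$ object of $\textsf{E-PRA}^{\omega*}$ satisfying $\varphi(h,f_{i})$ for each $i\le k$; this is an elementary recursion and needs no choice.

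\textbf{Applying idealization.} Next I would apply \textsf{I} in the form available in \cite{bennosam}*{\S3.2} to the formula $(\exists h^{1})(\forall^{\st} f^{1})\varphi(h,f)$: since for every \emph{standard-finite} set of standard $f$'s there is a common dominating $h$ (by the previous paragraph, noting that a standard-finite set of standard objects is listed by a standard finite sequence), idealization yields a single $h_{0}^{1}$ with $(\forall^{\st}f^{1})(\forall n^{0})(f_{}(n)\le_{0}h_{0}(n))$, which is exactly $(\forall^{\st}f^{1})(f\le_{1}h_{0})$. Finally, to see that $h_{0}$ is \emph{nonstandard}, I would argue by contradiction: if $h_{0}$ were standard, then the successor function $n\mapsto h_{0}(n)+1$ is standard (being obtained by a standard primitive-recursive operation from a standard object, per Nelson's dictum in Remark~\ref{ohdennenboom}), so it would have to be dominated by $h_{0}$ everywhere, giving $h_{0}(0)+1\le_{0}h_{0}(0)$, a contradiction. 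Hence $\neg\st(h_{0})$.

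\textbf{Main obstacle.} The only subtle point is matching the precise syntactic form of idealization as formulated in \cite{bennosam} — in particular whether it is stated for internal formulas with standard \emph{type-$1$} parameters and whether the finite-witnessing must be phrased via standard finite sequences rather than external finite sets. I expect this to be routine given that \textsf{E-PRA}$^{\omega*}$ already codes finite sequences of arbitrary type, so the $\max$ construction above is internal; but one must be careful to present the finite-witnessing hypothesis in the exact shape \textsf{I} demands (a single internal $\Phi$ with the standard-finite-tuple quantifier in front) rather than appealing to an informal ``for finitely many $f$'s'' reading. Everything else — the domination recursion and the nonstandardness argument — is immediate.
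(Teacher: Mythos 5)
Your proposal is correct and follows essentially the same route as the paper: the premise of idealization \textsf{I} is witnessed by taking the pointwise maximum over a standard finite sequence of type-$1$ objects, and \textsf{I} then yields a single $h_{0}$ dominating all standard $f^{1}$. Your explicit contradiction argument for $\neg\st(h_{0})$ (via $n\mapsto h_{0}(n)+1$) is a correct addition that the paper leaves implicit.
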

\begin{proof}
Note that the following formula is trivially true:
\be\label{frik}
(\forall^{\st} g^{1^{*}})(\exists h^{1})(\forall k^{1}\in g)\big[(\forall x^{0})(k(x)\leq_{0} h(x))\big], 
\ee
where `${1^{*}}$' is the type of sequences (with length of type $0$) of type 1 objects.  
The formula in square brackets in \eqref{frik} is internal and applying idealization \textsf{I} yields:
\[
(\exists h^{1})(\forall^{\st}g^{1})\big[(\forall x^{0})(g(x)\leq_{0} h(x))\big].
\]
The function $h$ is as required for the theorem.
\end{proof}
\begin{rem}[Constructive idealization]\rm
We shall refer to the function $h_{0}$ from Theorem \ref{hake} as `constructive' for the following reason:  The proof of Theorem \ref{hake} trivially goes though in the system \textsf{H} from \cite{brie}*{\S5.2}, which is a 
conservative extension of Heyting arithmetic with among other axioms \textsf{I} (See \cite{brie}*{Cor.\ 5.6}).  Hence, the existence of $h_{0}$ is \emph{constructively} acceptable, in that the axiom \textsf{I} included in the system \textsf{H} results in a conservative extension of Heyting arithmetic (in the original language).  
Heyting arithmetic in all finite types is only a small fragment of the usual\footnote{We have in mind such systems as \textsf{CZF} and Martin-L\"of Type Theory (\cites{ML, aczelrathjen}).} systems providing a foundation for Bishop's \emph{Constructive Analysis} (\cite{bish1}).  Hence, we may refer to the function $h_{0}$ as `constructive (in the sense of Bishop)'.    
%
%
\end{rem}
%
%
%
Thirdly, we formulate our long-awaited bounding principle \textsf{(PB)}.  The function $h_{0}$ therein is intended to be the one from the previous theorem.  
Recall also the definition of `$\tau^{0}\leq_{0^{*}}\sigma^{0}$' as $|\sigma|=|\tau|\wedge (\forall i< |\sigma|)(\tau(i)\leq_{0} \sigma(i))$.  

\begin{princ}[\textsf{PB}] There is $h_{0}\sler 0$ such that for $f^{1}\rel 0$, $h\geq_{1}h_{0}$ and $M\sler h$,
\be\label{bound}
(\forall^{\st}g^{1})(\exists^{\st} x^{0})(f(\overline{g}x)\ne0) \asa (\forall g^{0}\leq_{0^{*}}\overline{h}M)(\exists x^{0}\leq M)(f(\overline{g}x)\ne0).
\ee
\end{princ}
Fourth, we prove the following theorem.  By \cite{yamayamaharehare}*{Theorem 2.2} and \cite{keisler1}, the base theory is weak, i.e.\ certainly not stronger than $\ACA_{0}$ and $\WKL_{0}$ respectively.  
\begin{thm}\label{labbekak}
In $\RCA_{0}^{\dagger}+\eqref{S}$, we have $(\mu_{1}) \asa\textup{(\textsf{PB})}\asa \Paai$.  \\
In $\RCA_{0}^{\dagger}+\eqref{S}+\textup{QF-AC}^{1,1}$, we have $(S^{2}) \asa\textup{(\textsf{PB})}\asa \Paai$.
\end{thm}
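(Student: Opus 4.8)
The plan is to reduce both displayed chains to the single equivalence $\textsf{(PB)}\asa\Paai$ over $\RCA_{0}^{\dagger}+\eqref{S}$. Indeed, $\Paai$ is equivalent to the functional $(\mu_{1})$ of \eqref{forex} over $\RCAO$ (hence over $\RCA_{0}^{\dagger}$) by \cite{bennosam}*{Cor.\ 15}, and $(\mu_{1})$ is in turn equivalent to $(S^{2})$ over $\RCAO+\textup{QF-AC}^{1,1}$; together with Theorem \ref{markje} this shows that the first chain follows from $\textsf{(PB)}\asa\Paai$ and the second from the same equivalence plus the additional $\textup{QF-AC}^{1,1}$. It thus remains to prove $\Paai\Rightarrow\textsf{(PB)}$ and $\textsf{(PB)}\Rightarrow\Paai$.

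For $\Paai\Rightarrow\textsf{(PB)}$: fix $h_{0}\sler 0$ dominating all standard functions, as provided by Theorem \ref{hake}; this is the witness demanded by \textsf{(PB)}. Since \eqref{forex} is parameter-free, \textsf{PF-TP}$_{\forall}$ lets us fix a \emph{standard} $\nu^{1\di 1}$ as in \eqref{forex}, exactly as in the proof of Theorem \ref{lreall}. Now fix standard $f^{1}$, $h\geq_{1}h_{0}$, and $M\sler h$, and write $T_{f}$ for the standard tree $\{\sigma^{0}:(\forall x\leq|\sigma|)\, f(\overline{\sigma}x)=0\}$; by $\Paai$, the left-hand side of \eqref{bound} is equivalent to ``$T_{f}$ is well-founded''. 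If $T_{f}$ is ill-founded, then $\nu(f)$ is an infinite path, and being standard it satisfies $\nu(f)\leq_{1}h_{0}\leq_{1}h$, so the length-$M$ sequence $\overline{\nu(f)}M\leq_{0^{*}}\overline{h}M$ falsifies the right-hand side of \eqref{bound}; hence both sides of \eqref{bound} fail. If $T_{f}$ is well-founded, both sides should hold, and the substantial point is that no $\sigma\leq_{0^{*}}\overline{h}M$ of length $M$ lies in $T_{f}$. Any such $\sigma$ has every initial segment in $T_{f}$; passing to a level $z$ at which $\sigma$ is standard while $f$ (and $h$) remain standard-relative-to-$z$, the relative $\Pi_{1}^{1}$-transfer of Theorem \ref{lreall} upgrades ``$\sigma$ stays inside $T_{f}$ for $M$ steps'' to an actual infinite path through $T_{f}$ — one uses here that the sub-tree of branches bounded by $\overline{h}$ is, at that level, internally infinite, so an internal König's lemma (available from $(\exists^{2})$, hence from $\Paai$) yields the path — contradicting well-foundedness.

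For $\textsf{(PB)}\Rightarrow\Paai$ we use $\Omega$-invariance. By \eqref{bound}, the \emph{internal, bounded} formula on its right-hand side has a truth value (as a function of $f,h,M$) equal to that of the external left-hand side, and hence independent of the choice of $h\geq_{1}h_{0}$ and $M\sler h$. Consequently, working coordinatewise, the functional $F$ defined by letting $F((f,n),M)$ be the $n$-th entry of the leftmost $\sigma\leq_{0^{*}}\overline{h_{0}}M$ satisfying $(\forall x\leq M)\, f(\overline{\sigma}x)=0$ (and $0$ if there is no such $\sigma$) satisfies the $\Omega$-invariance hypothesis \eqref{corkikiki} of Theorem \ref{genall} on the standard inputs; since $\RCA_{0}^{\dagger}+\eqref{S}$ proves the relevant instance of $\Omega$-CA, we obtain a standard $G$ with $G(f,n)=_{0}F((f,n),N)$ for all standard $f,n$ and $N\in\Omega$. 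Setting $\nu(f):=\lambda n.\, G(f,n)$, a routine argument — again invoking \eqref{bound} to identify ``a bounded attempt of nonstandard length $N$ exists in $T_{f}$'' with ``$T_{f}$ is ill-founded'', and reading off the stabilised leftmost attempt — shows that $\nu$ witnesses \eqref{forex}, i.e.\ $(\mu_{1})$, and hence $\Paai$.

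The main obstacle is the well-founded case of $\Paai\Rightarrow\textsf{(PB)}$: a bounded attempt $\sigma\leq_{0^{*}}\overline{h}M$ may have thoroughly nonstandard entries, so one cannot simply take its ``standard part'' to obtain a path; the crux is identifying the correct relative-standardness level at which $\sigma$ (or the leftmost bounded attempt of length $M$) is standard while $f$ and $h$ stay standard-relative-to-it, so that Theorem \ref{lreall} can be applied. The mirror-image bookkeeping — showing $F((f,n),N)$ genuinely stabilises as $N$ ranges over $\Omega$, i.e.\ that the $\Omega$-invariance \eqref{corkikiki} really holds — is the delicate step in the converse direction.
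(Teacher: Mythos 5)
Your framing (reducing everything to $\textsf{(PB)}\asa\Paai$ via \cite{bennosam}*{Cor.\ 15} and Theorem \ref{markje}) and your ill-founded case (a standard $\nu$ from \eqref{forex} via \textsf{PF-TP}$_{\forall}$, dominated by $h_{0}$) agree with the paper, but the well-founded case of $\Paai\Rightarrow\textsf{(PB)}$ contains a genuine gap. The level $z$ you want does not exist: if $\sigma\rel z$ then $M=|\sigma|\rel z$ by axiom \eqref{final} of \textsf{BASIC}, so at any level where the bounded attempt $\sigma$ is standard its length is standard too, and ``$\sigma$ stays inside $T_{f}$ for $M$ steps'' gives no unboundedness at that level. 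The appeal to an internal K\"onig's lemma does not repair this: the subtree of $T_{f}$ bounded by $\overline{h}$ is only known to contain a branch of length $M$, which internally is compatible with that subtree being finite (dying out at level $M+1$) and with $T_{f}$ being well-founded, since internally $M$ is just a number; so no infinite path, hence no contradiction, is forthcoming. What actually closes this case in the paper is \eqref{S}, which your argument never uses in this direction: extend $\sigma\leq_{0^{*}}\overline{h}M$ by zeros to $l\leq_{1}h$, take its standard part $g_{1}\rel h$ via \eqref{S} with $z=h$, apply the relativised transfer of Theorem \ref{lreall} at level $h$ (legitimate since $f\rel 0$ and $0\rel h$) to get $x_{0}\rel h$ with $f(\overline{g_{1}}x_{0})\ne0$, and conclude $x_{0}\leq M$ and $f(\overline{\sigma}x_{0})\ne0$ because $M\sler h$ and $\overline{g_{1}}x_{0}=_{0}\overline{\sigma}x_{0}$.

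The converse direction is also not secured. Theorem \ref{genall} is stated for \emph{standard} $F$, whereas your $F$ is defined from the nonstandard parameter $h_{0}$; more seriously, the $\Omega$-invariance \eqref{corkikiki} is not ``routine'': the leftmost bounded attempt of length $N'$ restricted to length $N<N'$ may lie lexicographically strictly to the right of the leftmost attempt of length $N$, so stabilisation of the $n$-th coordinate over $N\in\Omega$ needs a real argument, and the closely related stabilisation statement in the paper (Corollary \ref{FRS}, about the algorithm $\mathfrak{A}$) is proved only \emph{assuming} $\Paai$ together with \eqref{S}, \eqref{STP} and an external-induction argument --- i.e.\ assuming the very principle you are trying to derive. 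The paper's converse avoids all comprehension: \textsf{(PB)} yields $\paai$, hence the reverse implication of $\Paai$; for the forward one, given an arbitrary (possibly nonstandard) $g_{1}$, set $h(n):=\max(h_{0}(n),g_{1}(n))$, so that $h\geq_{1}h_{0}$ and \textsf{(PB)} applies with this $h$; since $\overline{g_{1}}M\leq_{0^{*}}\overline{h}M$ for $M\sler h$, the right-hand side of \eqref{bound} immediately gives $(\exists x^{0}\leq M)(f(\overline{g_{1}}x)\ne0)$, i.e.\ the internal conclusion. The freedom to vary $h$ in \textsf{(PB)} is exactly what encodes the internal universal quantifier over $g$, and your proof should exploit it rather than $\Omega$-CA.
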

\begin{proof}
We establish $\Paai\asa (\textup{PB})$ in $\RCA_{0}^{\dagger}+\eqref{S}$ using Theorem \ref{lreall}, and the theorem is then immediate by \cite{bennosam}*{Cor.\ 15}.

\medskip

In order to prove $\Paai \di$ (PB), consider $h_{0}$ from Theorem \ref{hake}, assume $(\forall^{\st}g^{1})(\exists^{\st} x^{0})(f(\overline{g}x)\ne0)$ for $f\rel 0$ and apply $\Paai$ to obtain $(\forall g^{1}\rel h_{0})(\exists x^{0}\rel h_{0})(f(\overline{g}x)\ne0)$.  
Now consider $g_{0}\leq_{1} h_{0}$ (which may or may not satisfy $g_{0}\rel h_{0}$) and apply \eqref{S} to obtain $g_{1}\rel h_{0}$ such that: 
\be\label{mondayspecial}
(\forall x^{0}\rel h_{0})(g_{0}(x)= g_{1}(x)).  
\ee
Since we already proved $(\forall g^{1}\rel h_{0})(\exists x^{0}\rel h_{0})(f(\overline{g}x)\ne0)$, we obtain $(\exists x_{0}\rel h_{0})(f(\overline{g_{1}}x_{0})\ne0)$.  
By \eqref{mondayspecial}, we also get $(\exists x_{0}\rel h_{0})(f(\overline{g_{0}}x_{0})\ne0)$, as $\overline{g_{0}}z=_{0}\overline{g_{1}}z\rel h_{0}$ for any $z\rel h_{0}$.     
Hence, we have proved that 
\[
(\forall g^{1}\leq_{1} h_{0} )(\exists x^{0}\rel h_{0})(f(\overline{g}x_{0})\ne0),
\]
and for $h_{0}\srel M^{0}$, we obtain:
\be\label{Sgrang}
(\forall g^{1}\leq_{1} h_{0} )(\exists x^{0}\leq M)(f(\overline{g}x_{0})\ne0).
\ee
By definition, \eqref{Sgrang} now yields: 
\be\label{Sram}
(\forall g^{0}\leq_{0^{*}} \overline{h_{0}}M )(\exists x^{0}\leq M)(f(\overline{g}x_{0})\ne0).
\ee
Indeed, for $g^{0}\leq_{0^{*}} \overline{h_{0}}M$, define $l^{1}:=g*00\dots$ and apply \eqref{Sgrang} in light of $l\leq_{1}h_{0}$.  Now repeat the above steps for any $h\geq_{1}h_{0}$ instead of $h_{0}$ to obtain the forward implication in \eqref{bound}.  

\medskip

Now assume the formula \eqref{Sram} for $M\sler h_{0}$ and $h_{0}$ as in the first paragraph of this proof, and consider standard $g^{1}$.  By the definition of $h_{0}$, we have $g\leq_{1}h_{0}$, implying $\overline{g}M\leq_{0^{*}} \overline{h_{0}}M$.
By assumption, we have $(\exists x^{0}\leq M)(f(\overline{\overline{g}M}x_{0})\ne0)$, which immediately yields $(\exists x^{0}\leq M)(f(\overline{g}x_{0})\ne0)$ and also $(\exists x^{0}\rel M)(f(\overline{g}x_{0})\ne0)$.  Applying $\paai$ yields  $(\exists x^{0}\rel 0)(f(\overline{g}x_{0})\ne0)$, and we have proved $(\forall^{\st}g^{1})(\exists^{\st} x^{0})(f(\overline{g}x)\ne0)$.  The equivalence \eqref{bound} now follows.  

\medskip

For the implication $\textup{\textsf{(PB)}}\di \Paai$, note that \textsf{(PB)} implies $\paai$, which immediately yields the reverse direction in $\Paai$.  
To prove the remaining implication in the latter, assume $(\forall^{\st}g^{1})(\exists^{\st} x^{0})(f(\overline{g}x)\ne0)$ for standard $f$, and let $h_{0}$ be the function from \textsf{(PB)}.  
Fix $g_{1}^{1}$ and define $h^{1}$ by $h(n):=\max(h_{0}(n), g_{1}(n))$.  Clearly, $h\geq_{1}h_{0}$, yielding $(\forall g^{0}\leq_{0^{*}}\overline{h}M)(\exists x^{0}\leq M)(f(\overline{g}x)\ne0)$ by \textsf{(PB)} for $M\sler h$.  
Hence, for $g^{0}_{0}=\overline{g_{1}}M$, we obtain $(\exists x^{0}\leq M)(f(\overline{g_{0}}x)\ne0)$, implying $(\exists x^{0})(f(\overline{g_{1}}x)\ne0)$.  Then $(\forall g^{1})(\exists  x^{0}\rel g)(f(\overline{g}x)\ne0)$ by $\paai$ and the 
forward implication in $\Paai$ also holds.  
\end{proof}
Comparing \eqref{tranke} and \eqref{bound}, we note that Nonstandard Analysis allows us to treat \emph{type zero quantifiers as `one-dimensional' bounded searches}, 
and \emph{type one quantifiers as `two-dimensional' bounded searches}.  

\medskip

It is then a natural question, originally due to Dag Normann, if \eqref{bound} allows one to \emph{find} a standard $g^{1}$ such that $(\forall^{\st} x^{0})(f(\overline{g}x)=0)$, assuming such exists?  
Now, the formula \eqref{bound} from \textsf{(PB)} suggests the following algorithm to solve this question. 
As above, we fix $h_{0}$ as in Theorem~\ref{hake} and $M\sler h_{0}$.  
\begin{algo}[$\mathfrak{A}$]
Check in lexicographical order starting with $\sigma=00\dots 00$ the formula $A(\sigma)\equiv(\forall x^{0}\leq M)(f(\overline{\sigma}x)=0)$ for all $\sigma$ of length $M$ and bounded above by $\overline{h}_{0}M$ .  
Output the lexicographically first $\sigma_{0}$ satisfying $A(\sigma_{0})$ if such there is, and $M_{0}\dots M_{0}$ otherwise, where $M_{0}=\max_{i\leq M}h_{0}(i)$.      
\end{algo}
Let us denote by $\mathfrak{A}(f)$ the output of the previous algorithm on input $f^{1}$. 
Note that there is no a priori reason why $\mathfrak{A}(f)$ even outputs a sequence with a standard part, i.e.\ such that $(\forall^{\st}n^{0})\st(\mathfrak{A}(f)(n))$, as the lexicographical order places lots\footnote{For instance, the sequence $0M00\dots 00$ comes before $n00\dots00$ for any infinite $M^{0}$ and standard $n^{0}$ in the lexicographical order.  In general, there does not seem to be an \emph{internal} ordering in which all the type zero sequences with standard part come first.} of sequences \emph{without a standard part} before those with one.  

\medskip

%
%
The previous observation notwithstanding, the following corollary shows that for standard $f^{1}$ and assuming \textsf{(PB)}, the algorithm $\mathfrak{(A)}$ always outputs a witness to $(\exists^{\st}g^{1})(\forall^{\st} x^{0})(f(\overline{g}x)=0)$ if and only if the latter formula holds.  
In other words, $\mathfrak{(A)}$ finds the required standard witness if such exists.  
\begin{cor}\label{FRS}
In $\RCA_{0}^{\dagger}+\eqref{S}+\textup{(\textsf{PB})}$, we have
\be\label{travi}
(\forall^{\st}f^{1})\big[(\forall^{\st}n^{0})\st(\mathfrak{A}(f)(n))\asa (\exists^{\st}g^{1})(\forall^{\st}x^{0})f(\overline{g}x)=0\big].  
\ee
\end{cor}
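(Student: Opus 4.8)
The plan is to prove the biconditional \eqref{travi} directly, by a two-direction argument that leans on the equivalence \textup{(\textsf{PB})}\asa\Paai\ from Theorem \ref{labbekak} and the characterisation of $h_0$ from Theorem \ref{hake}. Throughout, fix standard $f^1$, let $h_0\sler 0$ be the function provided by \textup{(\textsf{PB})}, and fix $M\sler h_0$ (equivalently $h_0\srel M$), so that $\mathfrak A(f)$ is well-defined and, crucially, depends only on $f$, $h_0$, $M$; by $\Omega$-invariance considerations its relevant properties will not depend on the particular choice of $M$.

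For the \emph{reverse} implication, assume $(\exists^{\st}g^1)(\forall^{\st}x^0)f(\overline gx)=0$, say witnessed by standard $g_1$. Since $g_1$ is standard, $g_1\leq_1 h_0$ by the defining property of $h_0$ in Theorem \ref{hake}, so $\overline{g_1}M\leq_{0^*}\overline{h_0}M$, i.e.\ $\overline{g_1}M$ is one of the sequences $\sigma$ of length $M$ inspected by $\mathfrak A$. Moreover, using $\paai$ (which follows from \textup{(\textsf{PB})}) one upgrades $(\forall^{\st}x^0)f(\overline{g_1}x)=0$ to $(\forall x^0)f(\overline{g_1}x)=0$, in particular $A(\overline{g_1}M)\equiv(\forall x^0\leq M)f(\overline{\overline{g_1}M}x)=0$ holds. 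Hence $\mathfrak A(f)$ outputs the lexicographically \emph{first} $\sigma_0$ with $A(\sigma_0)$, and $\sigma_0\leq_{0^*}\overline{h_0}M$. The task is then to show $\sigma_0$ has a standard part, i.e.\ $(\forall^{\st}n^0)\st(\sigma_0(n))$. Since $\sigma_0\leq_{0^*}\overline{h_0}M$ we get $\sigma_0(n)\leq h_0(n)$ for standard $n$, and $h_0(n)$ is \emph{not} necessarily standard; so the boundedness alone is not enough — this is where one must work. I would argue that $\sigma_0$ is lexicographically below $\overline{g_1}M$ or equals it, and reconstruct the standard witness: form $l:=\sigma_0*00\dots$ and show via $\paai$ that $(\forall x^0)f(\overline{\sigma_0}x)=0$ would force $l$ standard-ish; more robustly, one runs the forward direction of the argument below to the output of $\mathfrak A$ and uses \eqref{S} together with $\Omega$-invariance (Theorem \ref{genall}, $\Omega$-CA) to see that the lexicographically least solution is in fact $\Omega$-invariant and hence standard by $\Omega$-CA. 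This yields $(\forall^{\st}n^0)\st(\mathfrak A(f)(n))$.

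For the \emph{forward} implication, assume $(\forall^{\st}n^0)\st(\mathfrak A(f)(n))$; write $g^*:=\mathfrak A(f)$ and note this is a standard function (being $\Omega$-invariant: its value at each standard $n$ is standard, so by $\Omega$-CA, i.e.\ \eqref{corkikiki}$\di$\eqref{corkikiki2} in Theorem \ref{genall} applied with $\sigma=0$, there is a standard $G$ agreeing with $g^*$ on all standard arguments, and we may take $g^*$ to be that $G$). Two cases. If $g^*$ is the ``default'' output $M_0\dots M_0$ then every standard value equals $M_0=\max_{i\leq M}h_0(i)$, which is nonstandard; this contradicts $(\forall^{\st}n^0)\st(g^*(n))$ — so we are in the other case, where $\mathfrak A(f)$ output a genuine $\sigma_0$ with $A(\sigma_0)\equiv(\forall x^0\leq M)f(\overline{\sigma_0}x)=0$. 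Then $(\forall x^0\leq M)f(\overline{g^*}x)=0$, hence $(\forall x^0\rel M)f(\overline{g^*}x)=0$, and since $g^*$ is standard, Theorem \ref{lreall} (the relative form of $\paai$, with parameter $z=M$, or simply $\paai$ itself) upgrades this to $(\forall^{\st}x^0)f(\overline{g^*}x)=0$. Thus $g^*$ is the desired standard witness and $(\exists^{\st}g^1)(\forall^{\st}x^0)f(\overline gx)=0$ holds.

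The main obstacle is the reverse direction, specifically showing that when a standard solution exists the lexicographically least solution $\sigma_0$ below $\overline{h_0}M$ \emph{has a standard part}, despite the warning in the text that the lexicographical order scatters sequences without standard part among those with one. The key realisation is that $\mathfrak A(f)$, once it finds any $\sigma_0$, finds the lexicographically \emph{minimal} one, and minimality is an $\Omega$-invariant property: if a standard solution $g_1$ exists then some solution $\leq_{\text{lex}}\overline{g_1}M$ exists, and one checks (using \eqref{S} to transfer back and forth between sequences relative to $h_0$ and their standard approximations, exactly as in the proof of Theorem \ref{labbekak}) that the minimal solution does not depend on $M$ among nonstandard $M$, hence is $\Omega$-invariant, hence standard by $\Omega$-CA; a standard sequence trivially has a standard part. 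I would present this as the crux and route everything else through the already-established machinery of Theorems \ref{hake}, \ref{lreall}, \ref{genall} and \ref{labbekak}.
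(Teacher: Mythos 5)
Your forward direction is essentially right in outline (the default output $M_{0}\dots M_{0}$ is excluded because $M_{0}\geq h_{0}(0)$ is nonstandard, and then the genuine solution restricted to standard arguments gives the witness), but the standardisation step is wrong as stated: Theorem \ref{genall} ($\Omega$-CA) requires a \emph{standard} functional $F(\cdot,M)$ together with $\Omega$-invariance, i.e.\ independence of the choice of the infinite number, and not ``all values at standard arguments are standard''. The map computing $\mathfrak{A}(f)$ has the nonstandard parameters $h_{0}$ and $M$ (indeed its output even has length $M$), so $\Omega$-CA simply does not apply. The correct and much simpler move, which is what the paper does, is to apply \eqref{S}/\eqref{STP} to $\mathfrak{A}(f)*00\dots$ and use the resulting standard part; this repair is routine, so the forward direction is only a local flaw.

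The genuine gap is in the reverse direction, which is exactly the crux you identify. Your argument there is: the lexicographically least solution is $\Omega$-invariant, hence standard by $\Omega$-CA, hence trivially has a standard part. First, as above, $\Omega$-CA is not available for this functional (nonstandard parameter $h_{0}$, and the object literally changes with $M$), so even granting invariance you cannot conclude standardness this way. Second, and more importantly, no argument is given that the standard coordinates of the lexicographically least $\sigma\leq_{0^{*}}\overline{h_{0}}M$ with $(\forall x\leq M)(f(\overline{\sigma}x)=0)$ are independent of $M$, or even standard; a priori the lexicographic order prefers sequences with nonstandard entries at standard positions (this is precisely the warning in the footnote preceding the corollary), and ruling this out is essentially the content of the statement being proved, so invoking it is close to circular. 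What is actually needed is the contradiction argument of the paper: assuming $\mathfrak{A}(f)(n_{0}+1)$ is nonstandard for some standard $n_{0}$, take $n_{0}$ \emph{least} such (the existence of a least such standard number is itself nontrivial and is obtained from an external induction principle proved via \eqref{STP}); note $\overline{\mathfrak{A}(f)}n_{0}$ is then a standard parameter; apply \eqref{S} relative to $h_{0}$ to a solution below $\overline{h_{0}}M$ whose first $n_{0}$ values are bounded by $\overline{\mathfrak{A}(f)}n_{0}$, and transfer via (the relativised) $\Paai$ to obtain a \emph{standard} $g_{1}$ with $(\forall x)(f(\overline{g_{1}}x)=0)$ and $\overline{g_{1}}n_{0}\leq_{0^{*}}\overline{\mathfrak{A}(f)}n_{0}$; since $g_{1}(n_{0}+1)$ is finite while $\mathfrak{A}(f)(n_{0}+1)$ is infinite, $\overline{g_{1}}M$ precedes $\mathfrak{A}(f)$ lexicographically, contradicting minimality of the output. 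You name the right ingredients (\eqref{S}, $\Paai$, lexicographic minimality, the machinery of Theorem \ref{labbekak}), but your proposal replaces this argument by the unjustified invariance-plus-$\Omega$-CA shortcut, and also omits the external-induction step needed to pick the least $n_{0}$; as it stands the hard direction is not proved.
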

\begin{proof}
The forward direction in \eqref{travi} is immediate due to \eqref{STP}.  
For the reverse direction, by the theorem, we may use $\Paai$; Now suppose that for some standard $f_{0}$, we have the right-hand side of \eqref{travi} and the sequence $\mathfrak{A}(f_{0})$ is such that $(\exists^{\st}n_{0})\neg\st(\mathfrak{A}(f_{0})(n_{0}+1))$.  
For now, we assume that $n_{0}$ is \emph{the least such} number, and later prove that such a least number indeed exists using \eqref{STP}.  
By our assumption, the sequence $\overline{\mathfrak{A}(f_{0})}n_{0}$ is standard and we have the following formula:
\be\label{longue}
(\exists \sigma_{1}^{0}\leq_{0^{*}}\overline{h_{0}}M )\big[(\forall x^{0}\leq M)f_{0}(\overline{\sigma_{1}}x)=0\wedge \overline{\sigma_{1}}n_{0}\leq_{0^{*}} \overline{\mathfrak{A}(f_{0})}n_{0} \big].
\ee
Since $(\forall n^{0}\rel h_{0})(\sigma(n)\leq h_{0}(n))$, we can apply \eqref{S} for $z=h_{0}$ and obtain $g\rel h_{0}$ which is the standard part of $\sigma_{1}$ in the previous formula.  Hence, \eqref{longue} yields: 
\be\label{incheek}
(\exists g^{1}\rel h_{0})\big[(\forall x^{0}\rel h_{0})f_{0}(\overline{g}x)=0\wedge \overline{g}n_{0}\leq_{0^{*}} \overline{\mathfrak{A}(f_{0})}n_{0} \big].
\ee
By (a trivial variation of) Theorem \ref{lreall}, the previous formula and $\Paai$ yield:
\be\label{tongue}
(\exists^{\st} g_{1}^{1})\big[(\forall x^{0})f_{0}(\overline{g_{1}}x)=0\wedge \overline{g_{1}}n_{0}\leq_{0^{*}} \overline{\mathfrak{A}(f_{0})}n_{0} \big].
\ee
Note that we are allowed to apply Transfer as $\overline{\mathfrak{A}(f_{0})}n_{0}$ is a \emph{standard} parameter in \eqref{incheek} by the axioms $\mathcal{T}_{\st}^{*}$ of $\RCAO$ (See \cite{brie}*{Def.\ 2.2 and Lemma~2.8}).
Alternatively, use \eqref{STP} to obtain the standard part of $\overline{\mathfrak{A}(f_{0})}n_{0}*00\dots$ and replace the latter by the former in \eqref{incheek}.  

\medskip

However, for the standard $g^{1}_{1}$ as in \eqref{tongue}, we also have $g_{1}(n_{0}+1)<\mathfrak{A}(f_{0})(n_{0}+1)$, as the former number is finite, and the latter infinite.  
Hence, it is clear that $\overline{g_{1}}M$ comes before $\mathfrak{A}(f_{0})$ in the lexicographical ordering, and $(\mathfrak{A})$ should have output $\overline{g_{1}}M$ by \eqref{tongue}.  
This contradiction proves the theorem, modulo our assumption on $n_{0}$.  
To prove the latter assumption, we will prove the following version of \emph{external induction} using \eqref{STP}:  
\be\label{EI}
(\forall f^{1})\big[[ \st(f(0))\wedge (\forall^{\st}m)(\st(f(m))\di \st(f(m+1)))  ]\di  (\forall^{\st}n)\st(f(n)) \big].
\ee
To this end, fix $f^{1}$ satisfying the antecedent of \eqref{EI} and define the set $X^{1}$ as $\{(n,f(n)): n=n\} $ (using the well-known (standard) coding of pairs).  Using \eqref{STP}, let $Y^{1}$ be the standard part of $X$, and let $Z$ be the projection of $Y$ onto the first coordinate.  
Then $0\in Y$ and $(\forall^{\st}m^{0})[m\in Y \di m+1\in Y]$ by the antecedent of \eqref{EI}.  By (quantifier-free) induction, we have $(\forall^{\st}n^{0})(n\in Y)$, implying the consequent of \eqref{EI}.    
Finally, note that our assumption on $n_{0}$ from the previous paragraph of this proof, follows from the contraposition of \eqref{EI}.
\end{proof}
In light of the above, it is straightforward to formulate a version of \eqref{travi} \emph{equivalent} to \textsf{(PB)} using $h\geq_{1}h_{0}$ and $M\sler h$ as in the latter.  

\medskip

In conclusion, we have proved that \textsf{(PB)} gives rise to the algorithm $\mathfrak{(A)}$ for finding witnesses to $\Sigma_{1}^{1}$-formulas relative to the standard world.  
In particular, Corollary~\ref{FRS} establishes that \textsf(PB) expresses that \emph{one can search through the reals}.  
%
%
%
We end this section with a highly relavant note on generalisations of \textsf{(PB)}.
\begin{rem}[Generalisations]\label{genpb}\rm
Above, we proved \textsf{(PB)} for standard functions $f^{1}$, but the proof of Theorem~\ref{labbekak} easily generalises to any $f$ using the following `relativised idealisation':  
\be\tag{$\textup{\textsf{rI}}$}
(\forall z)\big[ (\forall x^{\sigma^{*}} \rel z)(\exists y^{\tau})(\forall x'\in x)\varphi(x', y)\di (\exists y^{\tau})(\forall x^{\sigma}\rel z)\varphi(x,y)   \big].
\ee
Indeed, in the same way as in the proof of Theorem \ref{hake}, the axiom \textsf{(rI)} easily yields a function $h^{1}\sler z$ which dominates all functions $f^{1}\rel z$ everywhere, i.e.\ $(\forall f^{1}\rel z)(\forall n^{0})(f(n)\leq_{0} h(n))$.  
It is then easy to obtain a version of \textsf{(PB)} for `$\rel 0$' replaced by `$\rel z$' following the proof of Theorem \ref{labbekak}.  This version would be as follows:
\begin{princ}[\textsf{rPB}] For all $z$, there is $h_{0}\sler z$ such that for $f^{1}\rel z$, $M\sler h\geq_{1}h_{0}$\textup{:}
\be\label{boundr}
(\forall g^{1}\rel z)(\exists x^{0}\rel z)(f(\overline{g}x)\ne0) \asa (\forall g^{0}\leq_{0^{*}}\overline{h}M)(\exists x^{0}\leq M)(f(\overline{g}x)\ne0).
\ee
\end{princ}
A generalised version of the algorithm $\mathfrak{(A)}$ can now be read off from \textsf{(rPB)}.  
Finally, in the same was as for \cite{brie}*{Cor.\ 7.8}, one proves that \textsf{(rI)} yields a conservative extension of Peano Arithmetic (and the same for fragments at least \EFA).  These generalisations are straightforward and we therefore do not go into details.    
\end{rem}
\subsection{Stratified bounding and $\Delta_{1}^{1}\textup{-\textsf{CA}}_{0}$}\label{main2}
In this section, we establish a bounding result like \textsf{(PB)} for the system $\Delta_{1}^{1}$\textsf{-CA}$_{0}$ (See \cite{simpson2}*{I.11.8}).  
In light of the similarities between the nonstandard treatment of $\Pi_{1}^{0}$ and $\Delta_{1}^{0}$-formulas in Section \ref{motig}, such a result is expected.  
We use the abbreviation $D(f, g)$ for the following formula, expressing that $f, g$ give rise to a $\Delta_{1}^{1}$-formula:
\[
(\exists h^{1})(\forall x^{0})[f(\overline{h}x)=0]\asa (\forall k^{1})(\exists y^{0})[g(\overline{k}y)\ne0]. 
\]
First of all, consider the following comprehension and transfer principle:
\be\tag{$\Deltat$}
(\forall f^{1}, g^{1})\big[ D(f, g)\di [D(f,g)\asa (\exists^{\st} h^{1})(\forall^{\st} x^{0})(f(\overline{h}x)=0)]\big],
\ee
\be\tag{$D_{2}$}
(\exists \Phi^{(1\times1)\di 1})(\forall f^{1}, g^{1})\big[ D(f, g)\di [\Phi(f, g)=0\asa (\exists h^{1})(\forall x^{0})(f(\overline{h}x)=0)]\big].
\ee
Clearly, $(D_{2})$ is the functional version of $\Deltac$, and we have the following theorem.    
%
\begin{thm}
In $\RCAO+\eqref{STP}+\QFAC^{1,1}$, we have $\Deltat\asa (D_{2})$.
\end{thm}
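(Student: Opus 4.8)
The plan is to prove the two implications of $\Deltat\asa (D_{2})$ separately, using Theorem~\ref{genall} ($\Omega$-CA) as the central tool for the forward direction, in close analogy with the treatment of $\paai\asa(\exists^{2})$ and the Suslin functional cases. First I would note that the reverse implication $(D_{2})\di\Deltat$ is the easy one: given the functional $\Phi$ from $(D_{2})$, apply $\QFAC^{1,1}$-free reasoning---since $(D_{2})$ has no parameters, $\textsf{PF-TP}_{\forall}$ applies and we may take $\Phi$ standard; then for standard $f,g$ with $D(f,g)$, Transfer applied to the internal equivalence $\Phi(f,g)=0\asa(\exists h^{1})(\forall x^{0})(f(\overline h x)=0)$ (both sides now having only standard parameters) pushes the existential quantifier over $h$ and the universal over $x$ into the standard world, yielding the consequent of $\Deltat$. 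Here $\paai$, which follows from $(D_{2})$ since the Turing jump is recursive in any instance of the $\Delta_1^1$ functional, is what licenses the Transfer step; alternatively one invokes $\textsf{PF-TP}_{\forall}$ after contraposing.

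For the forward direction $\Deltat\di(D_{2})$, the idea is to build the functional $\Phi$ by a bounded search whose answer is $\Omega$-invariant, then extract a standard functional via Theorem~\ref{genall}. Concretely, I would first use $\QFAC^{1,1}$ together with $\Deltat$ to obtain, for each pair $(f,g)$ with $D(f,g)$, a witness construction: when $(\exists h^1)(\forall x^0)f(\overline h x)=0$ holds one wants to detect this by a bounded search up to a nonstandard bound $M$, exactly as in the algorithm $\mathfrak{A}$ and Corollary~\ref{FRS}; when instead $(\forall k^1)(\exists y^0)g(\overline k y)\ne 0$ holds, one detects \emph{that} by the dual bounded search $(\forall k^0\leq_{0^*}\overline h_0 M)(\exists y^0\leq M)g(\overline k y)\ne 0$, which is available from the analogue of \textsf{(PB)} for the $g$-side. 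Define $F(f,g,M)$ to be $0$ if the first bounded search succeeds and $1$ otherwise; the content of $\Deltat$ (via the \textsf{(PB)}-style bounding, which transfers the two bounded searches back to the standard-world statements) is precisely that for $D(f,g)$ the value $F(f,g,M)$ does not depend on the choice of nonstandard $M$, i.e.\ $F(f,g,\cdot)$ is $\Omega$-invariant. Then Theorem~\ref{genall} with $\sigma=(1\times 1)$ (coding the pair $f,g$ as a single type-one object, or applying the theorem in the obvious product form) yields a standard $\Phi^{(1\times1)\di 1}$ with $\Phi(f,g)=F(f,g,N)$ for all nonstandard $N$, and unwinding the definition of $F$ shows $\Phi$ satisfies $(D_{2})$ for standard $f,g$; finally $\textsf{PF-TP}_{\forall}$ (again applicable since we can state the existence of such $\Phi$ parameter-free) removes the standardness restriction to give full $(D_{2})$.

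The main obstacle is the $\Omega$-invariance step: one must argue that for every $M,N\in\Omega$, $F(f,g,M)=F(f,g,N)$, and this is not automatic from the raw bounded searches---it requires that the two bounded searches (for the $h$-side and the $k$-side) are genuinely \emph{complementary} under the hypothesis $D(f,g)$, so that exactly one of them succeeds regardless of the bound. Establishing this is where $\Deltat$ does its real work: one shows that if the $h$-side bounded search fails for some nonstandard $M$, then by the \textsf{(PB)}-type equivalence the standard statement $(\exists^{\st}h^1)(\forall^{\st}x^0)f(\overline h x)=0$ fails, whence by $D(f,g)$ and $\Deltat$ the standard statement $(\forall^{\st}k^1)(\exists^{\st}y^0)g(\overline k y)\ne 0$ holds, whence the $k$-side bounded search succeeds for \emph{every} nonstandard bound---and symmetrically. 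The bookkeeping here mirrors the proof of Theorem~\ref{labbekak} and Corollary~\ref{FRS} closely, so I expect no genuinely new difficulty beyond carefully setting up the dominating function $h_0$ from Theorem~\ref{hake} uniformly for both sides and checking that \eqref{STP} suffices to pass between $\leq_{0^*}\overline h_0 M$-bounded sequences and their standard parts, exactly as in \eqref{mondayspecial}--\eqref{Sram}.
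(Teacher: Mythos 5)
There are two genuine gaps. First, your treatment of $(D_{2})\di\Deltat$ does not work as described: after taking $\Phi$ standard via \textsf{PF-TP}$_{\forall}$, the equivalence $\Phi(f,g)=0\asa(\exists h^{1})(\forall x^{0})(f(\overline{h}x)=0)$ has a \emph{$\Sigma_{1}^{1}$} right-hand side, and $\paai$ only transfers $\Pi_{1}^{0}$-formulas; pushing the quantifier $(\exists h^{1})$ into the standard world would amount to $\Sigma_{1}^{1}$-Transfer, i.e.\ essentially $\Paai$, which has Suslin-functional strength and is not available from $(D_{2})$. Producing the \emph{standard} witness $h$ demanded by $\Deltat$ is exactly where the paper needs $\QFAC^{1,1}$: one uses $(\exists^{2})$ to eliminate the arithmetical quantifiers, applies $\QFAC^{1,1}$ to Skolemize the existential set-quantifiers, obtaining a witnessing functional $\Xi$ as in \eqref{hrux}, and then applies \textsf{PF-TP}$_{\forall}$ to the resulting \emph{parameter-free} sentence to take $\Phi$ and $\Xi$ standard; the standard witness is then $\Xi(f,g,f,g)$, standard because standard functionals applied to standard arguments yield standard values. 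Your labelling of this direction as ``$\QFAC^{1,1}$-free'' is precisely the symptom of the missing idea.

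Second, in the direction $\Deltat\di(D_{2})$ your functional $F(f,g,M)$ is defined from the bounded search with bound $\overline{h_{0}}M$, where $h_{0}$ is the \emph{nonstandard} dominating function of Theorem \ref{hake}; hence $F$ is not standard, and Theorem \ref{genall} ($\Omega$-CA) is stated only for standard $F$, so it cannot be applied. The paper avoids this by first deriving from $\Deltat$ and $\paai$ that for standard $f,g$ with $D(f,g)$ any witness can be bounded by a \emph{standard} $l^{1}$, then applying \textsf{HAC}$_{\textup{int}}$ to obtain a standard $\Psi$ and thence a standard majorant functional $\Phi(f,g)$; only then is the bounded search run with the standard bound $\overline{\Phi(f,g)}M$ (as in \eqref{kurfd}), so that the resulting characteristic functional of $(f,g,M)$ is standard and $\Omega$-CA yields $(D_{2})^{\st}$, from which $(D_{2})$ follows as in the proof that $(S^{2})\asa(S^{2})^{\st}$. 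Your proposal omits the \textsf{HAC}$_{\textup{int}}$ step entirely, and your closing appeal to \textsf{PF-TP}$_{\forall}$ is also not directly applicable, since $(D_{2})^{\st}$ is an external statement. The overall shape (bounded search, $\Omega$-invariance under $D(f,g)$, then $\Omega$-CA) does match the paper, but without a standard bounding functional the key application of Theorem \ref{genall} fails.
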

\begin{proof}
Clearly, both principles imply $\paai$ and $(\exists^{2})$.  Assume $(D_{2})$ and drop the reverse implication in the consequent.  
In the resulting formula, bring all type one-quantifiers to the front, which results in a formula of the form $(\exists \Phi)\psi(\Phi)$ where $\psi\in \Pi_{2}^{1}$.  The existential set-quantifiers in $\psi(\Phi)$ originate from the $(\exists h^{1})$ in the consequent of $(D_{2})$, and the reverse implication in $D(f,g)$.  The universal quantifiers in $\psi(\Phi)$ originate from $(\forall f^{1}, g^{1})$ and from the forward implication in $D(f,g)$.     
Now use $(\exists^{2})$ to remove arithmetical quantifiers in $\psi(\Phi)$ and apply $\QFAC^{1,1}$ to obtain $\Xi$ witnessing the existential set-quantifiers in $\psi(\Phi)$.  One of the components of $\Xi$, say the first one, witnesses the existential quantifier which originated from the $(\exists h^{1})$ quantifier in the consequent of $(D_{2})$;  We ignore the other components of $\Xi$, and obtain the following, thanks to the definition of $D(f, g)$:   
\be\label{hrux}
(\exists \Phi, \Xi)(\forall f^{1}, g^{1})\big[ D(f, g)\di [\Phi(f, g)=0\di  (\forall x^{0})(f(\overline{\Xi(1)(f, g, f, g)}x)=0)]\big].  
\ee
Since the previous formula is parameter-free, we may assume $\Phi$ and $\Xi$ are standard by \textsf{PF-TP}$_{\forall}$.  
Hence, if $D(f, g)$ for standard $f^{1}, g^{1}$, $\Xi(f, g, f, g)$ is a standard witness for the left-hand side of $D(f,g)$, if this side holds, and $\Deltat$ follows.    

\medskip

Now assume $\Deltat$ and note that the latter and $\paai$ imply:
\[
(\forall^{\st} f^{1}, g^{1})(\exists^{\st}l^{1})\big[ D(f, g)\di \big[(\exists h^{1})(\forall x^{0})(f(\overline{h}x)=0)\di  (\exists h^{1}\leq_{1} l^{1})(\forall x^{0})(f(\overline{h}x)=0)\big]\big].
\]
Apply \textsf{HAC}$_{\textup{int}}$ to obtain standard $\Psi^{(1\times 1)\di 1^{*}}$ such that $(\exists l\in \Psi(f,g))$.  Define $\Phi^{(1\times1)\di 1}$ as follows: $\Phi(f,g)(n):=\max_{i<|\Psi(f,g)|}\Psi(f,g)(i)(n)$.  
Clearly, we have for all standard $f^{1},g^{1}$ that if $D(f, g)$ then
\be\label{kurfd}
(\exists h^{1})(\forall x^{0})[f(\overline{h}x)=0]\di  (\exists h^{1}\leq_{1} \Phi(f,g))(\forall x^{0})(f(\overline{h}x)=0)], 
\ee
and the reverse implication is trivial.  By \eqref{STP} and $(\exists^{2})$, the consequent of \eqref{kurfd} is equivalent to $(\exists h^{0}\leq_{0^{*}} \overline{\Phi(f,g)}M)(\forall x^{0}\leq M)(f(\overline{h}x)=0)]$ for any infinite $M$.  
Hence, with the same assumptions in place, we obtain:
\[
(\exists^{\st} h^{1})(\forall^{\st} x^{0})[f(\overline{h}x)=0]\asa (\exists h^{0}\leq_{0^{*}} \overline{\Phi(f,g)}M)(\forall x^{0}\leq M)(f(\overline{h}x)=0)].
\]
Using $\Omega$-CA, we obtain the functional as in $(D_{2})^{\st}$.  The latter implies $(D_{2})$ in the same way that $(S^{2})\asa (S^{2})^{\st}$ in the proof of \cite{bennosam}*{Cor.\ 15}.   
\end{proof}
We now prove a result similar to Theorem \ref{lreall} for $\Deltat$.  
\begin{cor}
In $\RCA_{0}^{\dagger}$, $\Deltat$ is equivalent to its relativised version:
\[
(\forall z)(\forall f^{1}, g^{1}\rel z)\big[ D(f, g)\di [D(f,g)\asa (\exists h^{1}\rel z)(\forall x^{0}\rel z)(f(\overline{h}x)=0)]\big].
\]
\end{cor}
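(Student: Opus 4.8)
The plan is to mimic the proof of Theorem \ref{lreall} almost verbatim, replacing the functional $(\mu_{1})$ by the functional $(D_{2})$ supplied by the preceding theorem. First I would note that the reverse implication is immediate: taking $z=0$ in the relativised version and using axiom (i) of \textsf{BASIC} (that $\st(x)\asa x\rel 0$) gives back $\Deltat$ directly, since $x\rel 0$ and $\st(x)$ are literally interchangeable.

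For the forward implication, I would start from $\Deltat$ and invoke the previous theorem (which holds over $\RCAO+\eqref{STP}+\QFAC^{1,1}$) to obtain the functional $\Phi$ of $(D_{2})$. More precisely, as in the proof of Theorem \ref{lreall}, I would unwind $(D_{2})$ into a parameter-free existential statement $(\exists \Phi)(\forall f,g)[\dots]$ (and ideally the Skolemised form $(\exists \Phi,\Xi)[\dots]$ from \eqref{hrux} so that a concrete witness $h=\Xi(1)(f,g,f,g)$ for the existential set-quantifier in the consequent of $D(f,g)$ is available). Since this statement is parameter-free, I apply \textsf{PF-TP}$_{\forall}$ to conclude that $\Phi$ and $\Xi$ may be taken standard. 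By axiom \eqref{three} of \textsf{BASIC}, a standard object satisfies $\Phi\rel z$ for every $z$, and by axiom \eqref{final}, applying $\Phi$ (and $\Xi$) to arguments $f,g\rel z$ yields outputs $\rel z$. Hence for $f,g\rel z$ with $D(f,g)$ holding, the witness $\Xi(1)(f,g,f,g)$ is itself $\rel z$ and, by the defining property of $(D_{2})$ together with $D(f,g)$, it satisfies $(\forall x^{0})(f(\overline{h}x)=0)$, whence in particular $(\forall x^{0}\rel z)(f(\overline{h}x)=0)$. This gives exactly the relativised consequent $(\exists h^{1}\rel z)(\forall x^{0}\rel z)(f(\overline{h}x)=0)$.

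I expect the only real subtlety — the point to be careful about rather than a genuine obstacle — is the bookkeeping of which existential set-quantifier in the normal form of $(D_{2})$ must be witnessed by a $\rel z$ object: one needs the witness for the $(\exists h^{1})$ in the \emph{consequent} of $(D_{2})$, which is precisely what \eqref{hrux} isolates, so reusing that formula rather than reworking the $\QFAC^{1,1}$/$(\exists^{2})$ manipulation is the clean route. A secondary point is that the statement of the corollary is phrased over $\RCA_{0}^{\dagger}$ alone, whereas the ambient theorem producing $(D_{2})$ needs $\eqref{STP}+\QFAC^{1,1}$; but since the equivalence $\Deltat\asa(D_{2})$ is being used only as a bridge and both directions of the corollary are proved \emph{assuming} the left side, the extra axioms are harmless — one simply remarks, as the excerpt does for Theorem \ref{lreall}, that the argument uses the functional form $(D_{2})$ internally and the parameter-free Transfer of $\RCAO$. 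Everything else is a routine transcription of the Theorem \ref{lreall} proof, so I would keep the write-up short and point the reader there.
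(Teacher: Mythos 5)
Your argument is exactly the paper's: the official proof is simply ``Immediate from \eqref{hrux}'', i.e.\ take the parameter-free Skolemised form \eqref{hrux}, make $\Phi$ and $\Xi$ standard via \textsf{PF-TP}$_{\forall}$, and propagate relative standardness through the \textsf{BASIC} axioms as in Theorem \ref{lreall}, with the reverse direction obtained by setting $z=0$. Your remark about the ambient axioms $\eqref{STP}+\QFAC^{1,1}$ needed to reach \eqref{hrux} is a fair observation about the paper's terse statement, but it does not change the argument.
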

\begin{proof}
Immediate from \eqref{hrux}.
\end{proof}
Now define the following versions of \textsf{(PB)} as follows:
\begin{princ}[\textsf{SB}] There is $h_{0}\sler 0$ such that for $f^{1}, g^{1}\rel 0$ with $D(f,g)$, $l^{1}\geq_{1}h_{0}$, and $M^{0}\sler l$, we have
\be\label{bound5}
(\forall^{\st}k^{1})(\exists^{\st} x^{0})(f(\overline{k}x)\ne0) \asa (\forall k^{0}\leq_{0^{*}}\overline{l}M)(\exists x^{0}\leq M)(f(\overline{k}x)\ne0).
\ee
\end{princ}
Let $P(k^{1}, h^{1},M^{0})$ be the lexicographically least sequence $\sigma^{0}\leq_{0^{*}}\overline{h}M$ of length $M$ such 
that $(\forall x^{0}\leq M)(k(\overline{\sigma}x)=0)$, if such exists and $M_{0}...M_{0}$ (of length $M$) otherwise, where $M_{0}$ is the maximum of $h(i)+1$ for $i\leq M$.  
The following principle should be compared to \eqref{karf} in Section~\ref{motig}.
\begin{princ}[\textsf{TB}] There is $h_{0}\sler 0$ such that for all $f^{1}, g^{1}\rel 0$ with $D(f,g)$, all $l^{1}\geq_{1}h_{0}$, and all $M^{0}\sler l$, we have
\be\label{bound6}
(\exists^{\st}k^{1})(\forall^{\st} x^{0})(f(\overline{k}x)=0) \asa P(f,l,M)<_{0^{*}} P(g,l,M).
\ee
\end{princ}
%
\begin{thm}\label{dargooo}
In $\RCA_{0}^{\dagger}+\eqref{S}+\textup{QF-AC}^{1,1}$, $(D_{2}) \asa\textup{(\textsf{SB})}\asa \textup{(\textsf{TB})}\asa \Deltat$.
\end{thm}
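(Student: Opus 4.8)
The plan is to reduce everything to the equivalence $\Deltat\asa(D_{2})$ from the previous theorem — which is available here since \eqref{S} implies \eqref{STP} (put $z=0$ and use the first \textsf{BASIC} axiom) — and then to \emph{sandwich} both \textsf{(SB)} and \textsf{(TB)} between $\Deltat$ and itself: I would establish the four implications $\Deltat\di\textup{(\textsf{SB})}$, $\textup{(\textsf{SB})}\di\Deltat$, $\Deltat\di\textup{(\textsf{TB})}$ and $\textup{(\textsf{TB})}\di\Deltat$. Together with $\Deltat\asa(D_{2})$ these yield that all four principles are pairwise equivalent, in particular the chain in the statement. Two facts are used throughout: $D(f,g)$ is symmetric, $D(f,g)\asa D(g,f)$ (both assert that $f$ has a branch iff $g$ has none), so \textsf{(SB)} and \textsf{(TB)}, though written for the pair $(f,g)$, also apply to $(g,f)$; and $\Omega$-CA (Theorem~\ref{genall}) together with $\paai\asa(\exists^{2})$ (Theorem~\ref{markje}) is already available in the base theory.

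For $\Deltat\di\textup{(\textsf{SB})}$ I would essentially transcribe the proof of $\Paai\di\textup{(\textsf{PB})}$ in Theorem~\ref{labbekak}: fix $h_{0}$ as in Theorem~\ref{hake}, take $l\geq_{1}h_{0}$ (which then dominates every standard $f^{1}$) and $M\sler l$, and use the relativised form of $\Deltat$ from the corollary above, \emph{at level $l$} — legitimate precisely because $D(f,g)$ makes ``$f$ has a branch'' $\Delta_{1}^{1}$ — to pass between the standard and the ``$\rel l$'' worlds; \eqref{S} handles the passage from ``$\rel l$'' to ``$\leq_{1}l$'' exactly as in the step from \eqref{Sgrang} to \eqref{Sram}, and the converse implication in \eqref{bound5} needs only (the contraposition of) $\paai$. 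For $\Deltat\di\textup{(\textsf{TB})}$ I would mimic Corollary~\ref{FRS}: since $D(f,g)$ holds, exactly one of $f,g$ has a branch; by $\Deltat$ and $\paai$ it even has a \emph{standard} branch $\hat{p}$, so $\overline{\hat{p}}M\leq_{0^{*}}\overline{l}M$ is a length-$M$ ``approximate branch'' and the corresponding $P$-value is a small real sequence (first coordinate $<M_{0}$), whereas the other has no branch and hence, by the lemma below, no length-$M$ approximate branch at all, so its $P$-value is $M_{0}\dots M_{0}$. Comparing the two, $P(f,l,M)<_{0^{*}}P(g,l,M)$ holds iff it is $f$ that has the branch, which by $\Deltat$ is equivalent to $(\exists^{\st}k)(\forall^{\st}x)f(\overline{k}x)=0$; this is \eqref{bound6}.

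For the two reverse implications I would first extract $\paai$ from each principle by a direct encoding: given standard $f_{0}^{1}$, set $f'(\langle\rangle):=0$ and $f'(\sigma):=f_{0}(\sigma(0))$ for $|\sigma|\geq1$, and $g'(\sigma):=0$ iff $(\forall z\leq|\sigma|)f_{0}(z)\ne0$. Then $D(f',g')$ holds unconditionally (both sides of its defining biconditional are equivalent to $(\exists x)f_{0}(x)=0$), so \textsf{(SB)} resp.\ \textsf{(TB)} applies to $(f',g')$; computing $P(f',l,M)$ and $P(g',l,M)$ explicitly, \textsf{(SB)} collapses to $(\forall^{\st}n)f_{0}(n)\ne0\asa(\forall n\leq l(0))f_{0}(n)\ne0$ and \textsf{(TB)} to a comparison of two bounded searches of the same content. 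Since $l$ ranges over \emph{all} functions $\geq_{1}h_{0}$, the number $l(0)$ ranges over all naturals above the infinite number $h_{0}(0)$, and this forces $(\forall^{\st}n)f_{0}(n)\ne0\di(\forall n)f_{0}(n)\ne0$, i.e.\ $\paai$. With $\paai$ (hence $(\exists^{2})$) available, both $\textup{(\textsf{SB})}\di\Deltat$ and $\textup{(\textsf{TB})}\di\Deltat$ follow: the forward half of $\Deltat$ is immediate from $\paai$, and for the converse half, if $f$ has a branch $p$ one takes $l:=\lambda i.\max(h_{0}(i),p(i)+1)\geq_{1}h_{0}$ and $M\sler l$, so that $\overline{p}M\leq_{0^{*}}\overline{l}M$ is an approximate branch; for \textsf{(SB)} this at once falsifies the right-hand side of \eqref{bound5}, while for \textsf{(TB)} it makes $P(f,l,M)$ a real value and, via the $D(f,g)$-argument above, forces $P(g,l,M)=M_{0}\dots M_{0}$, so $P(f,l,M)<_{0^{*}}P(g,l,M)$; in either case the biconditional in the principle delivers $(\exists^{\st}k)(\forall^{\st}x)f(\overline{k}x)=0$, whence $\Deltat$ by $\paai$.

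The step I expect to be the main obstacle is the auxiliary lemma underlying both ``hard'' implications: \emph{if $\sigma\leq_{0^{*}}\overline{l}M$ has length $M$ and $(\forall x\leq M)f(\overline{\sigma}x)=0$, then $f$ has an honest branch that is $\rel l$}. This is the set-quantifier analogue of the external-induction argument in the proof of Corollary~\ref{FRS}, and proving it requires combining \eqref{S} (to take the standard part $\hat{k}\rel l$ of $\sigma*00\dots$), the downward closure of each level $\{x:x\rel l\}$ under $\leq$ on $\N$ (so that the coordinates of $\sigma$ below any $\rel l$-number coincide with those of $\hat{k}$, and every $\rel l$-number lies below $M$), and the relativised form of $\paai$ in Theorem~\ref{lreall} (to promote ``$(\forall x\rel l)\,f(\overline{\hat{k}}x)=0$'' to ``$(\forall x)\,f(\overline{\hat{k}}x)=0$''). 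Beyond this, the only delicate bookkeeping is to record, at each appeal to $\Deltat$ or its relativisation, that the relevant $D(f,g)$ (or symmetric $D(g,f)$) is in force, and — as in Corollary~\ref{FRS} — to keep the sequences to which Transfer is applied standard.
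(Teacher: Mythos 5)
Your proposal is correct and follows essentially the same route as the paper: reduce everything to $\Deltat$ (and $(D_{2})$ via the preceding theorem), obtain \textsf{(SB)} by adapting the proof of Theorem~\ref{labbekak}, and handle \textsf{(TB)} by comparing the $P$-values, using a standard branch on one side and the ``no branch, hence no length-$M$ approximate branch'' step (via $\paai$, Theorem~\ref{lreall} relativised, and \eqref{S}) on the other. The only difference is that you spell out details the paper leaves implicit, namely the explicit coding extracting $\paai$ from \textsf{(SB)}/\textsf{(TB)} and the auxiliary lemma that an approximate branch below $\overline{l}M$ yields an honest branch $\rel l$; both are faithful elaborations of the paper's appeal to ``the same way as in the proof of Theorem~\ref{labbekak}''.
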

\begin{proof}
The proof of Theorem \ref{labbekak} can easily be adapted to yield the equivalence to \textsf{(SB)}.  For the implication $\Deltat\di$ \textsf{(TB)}, if the left-hand side of $D(f,g)$ holds, there is a standard such $h^{1}$ and $P(f,l,M)$ will be the initial 
segment of such a standard function.  Since the right-hand side of $D(f,g)$ holds, we can prove $(\forall k^{0}\leq_{0^{*}}\overline{l}M)(\exists y^{0}\leq M)(g(\overline{k}y)\ne0)$ 
(using $\paai$ and \eqref{S}) in the same way as in the proof of Theorem~\ref{labbekak}.  Thus, $P(g,l,M)$ is $M_{0}M_{0}\dots M_{0}M_{0}$ by definition, and $P(f,l,M)<_{0^{*}}P(g,l,M)$ follows.  
The reverse implication in \textsf{(TB)} follows similarly from $\Deltat$.  The implication \textsf{(TB)} $\di \Deltat$ follows in the same way as for \textsf{(SB)}, i.e.\ as in the proof of Theorem \ref{labbekak}, by noting that $P(f,l,M)$ in the right-hand side of \eqref{bound6} must be a sequence other than $M_{0}\dots M_{0}$, implying that $(\exists k^{0}\leq_{0^{*}}\overline{l}M)(\forall x^{0}\leq M)(f(\overline{k}x)=0)$.      
\end{proof}
Comparing \eqref{karf} and \eqref{bound6}, we note that stratified Nonstandard Analysis allows us to treat \emph{type zero quantifiers as `one-dimensional' bounded searches}, 
and \emph{type one quantifiers as `two-dimensional' bounded searches}.  Furthermore, as discussed in Remark \ref{genpb}, the search can be adapted to allow any parameter.        
Note that the right-hand side of \eqref{bound6} now plays the role of the algorithm $\mathfrak{(A)}$ from Section \ref{main1}.  

\subsection{Conclusion}
We now formulate the conclusion of this paper.  In particular, we exhibit the similarity between the notions `computable' (in the form $\Delta_{1}^{0}$) and `$\Delta_{1}^{1}$', and `Turing jump' and `hyperjump'.

\medskip

First of all, in light of Theorem \ref{labbekak}, the principle \textsf{(PB)} implies that for standard $f^{1}$, $h_{0}$ as in Theorem \ref{hake}, and $M^{0}\sler h_{0}$, we have
\be\label{bound2}
(\exists g^{1})(\forall x^{0})(f(\overline{g}x)=0) \asa (\exists g^{0}\leq_{0^{*}}\overline{h_{0}}M)(\forall x^{0}\leq M)(f(\overline{g}x)=0).
\ee
Hence, if we know that $(\exists g^{1})(\forall x^{0})(f(\overline{g}x)=0)$, then \eqref{bound2} tells us that a `two-dimensional' bounded search (involving the bounds $\overline{h_{0}}M$ and $M\sler h_{0}$) 
will yield a sequence $\sigma^{0}$ of length $M$ such that $(\forall^{\st} x^{0})(f(\overline{\sigma}x)=0)$.  By Corollary \ref{FRS}, we can find such a sequence \emph{with a standard part} using the algorithm $\mathfrak{(A)}$.  
By \eqref{STP} and $\paai$, the output of $(\mathfrak{A})$ then has a unique standard part $g^{1}$, which is such that $(\forall x^{0})(f(\overline{g}x)=0)$.  The search performed by the algorithm $\mathfrak{(A)}$ is similar to that associated to \eqref{tranke}, i.e.\ the Turing (resp.\ hyper-) jump corresponds to a one- (resp two-) dimensional bounded search.  
In both cases, an instance of the Transfer principle derives from the Turing- and hyperjump, and this principle is needed to certify that the associated search provides the correct output.          

\medskip

Secondly, by Theorem \ref{dargooo}, a similar result is available for $\Delta_{1}^{1}$-formulas, analogous to the case of $\Delta_{1}^{0}$-formulas.  Indeed, to verify if a $\Delta_{1}^{0}$-formula as in \eqref{valid} (with `st' removed) holds for some $n_{0}$, 
one checks, one by one, the following sequence:
\be\label{furg}
f(n_{0},0)=0, g(n_{0}, 0)\ne 0,f(n_{0},1)=0, g(n_{0}, 1)\ne 0,\dots, 
\ee
which by definition yields a terminating search, and gives rise to $p(\cdot, M)$ in \eqref{karf}.  The latter is similar to $P(\cdot, l, M)$ from \eqref{bound6}, and one can perform a search similar to \eqref{furg} for a $\Delta_{1}^{1}$-formula as in $D(f,g)$ by checking 
$(\forall x\leq M)f(\overline{\rho}x)=0$ and $(\exists y\leq M)g(\overline{\rho}x)=0$ for $\rho^{0}$ equal to $00\dots0 0$, $00\dots 01$, $00\dots 02$, et cetera, where all sequences have length $M$ (and are below $\overline{h_{0}}M$ from Theorem \ref{hake}).  Thus, verifying if a $\Delta_{1}^{0}$ (resp.\ $\Delta_{1}^{1}$) formula holds, corresponds to a double one- (resp two-) dimensional bounded search involving $p(\cdot, M)$ (resp.\ $P(\cdot, h_{0},M)$).

\medskip

Thirdly, we should stress that the right-hand sides of \eqref{bound}, \eqref{bound5}, \eqref{bound6}, and \eqref{bound2} do satisfy our conditions \eqref{conda} and \eqref{condb}.  
Indeed, as pointed out above, the function $h_{0}$ from Theorem \ref{hake} is constructively acceptable, and there is a clear similarity between the Kleene normal form and the bounded formulas.  

\medskip

In conclusion, stratified Nonstandard Analysis allows us to treat \emph{type zero quantifiers as `one dimensional' bounded searches}, 
and \emph{type one quantifiers as `two dimensional' bounded searches}.  
In particular, in light of Nelson's dictum from Remark \ref{ohdennenboom} that every specific object of conventional mathematics is a standard set, it seems that \textsf{(PB)}, \textsf{(SB)}, and \textsf{(TB)} allow us to search through the reals for internal properties 
\emph{involving parameters from conventional mathematics}, which is quite a rich world.  By Remark \ref{genpb}, the search can be adapted to allow any parameter.       

\begin{ack}\rm
This research was supported by the following funding bodies: FWO Flanders, the John Templeton Foundation, the University of Oslo, the Alexander von Humboldt Foundation, and the Japan Society for the Promotion of Science.  
The author expresses his gratitude towards these institutions. 
The author would like to thank Karel Hrbacek, Dag Normann, and Toby Meadows for their valuable advice.  
\end{ack}

\begin{bibdiv}
\begin{biblist}
\bib{aczelrathjen}{book}{
  author={Aczel, Peter},
  author={Rathjen, Michael},
  title={Notes on Constructive Set Theory},
  series={Reports Institut Mittag-Leffler}, volume={40},
  date={2000/2001},
}
\bib{avi3}{article}{
  author={Avigad, Jeremy},
  title={Weak theories of nonstandard arithmetic and analysis},
note={See \cite{simpson1}},
}

\bib{avi2}{article}{
  author={Avigad, Jeremy},
  author={Feferman, Solomon},
  title={G\"odel's functional \(``Dialectica''\) interpretation},
  conference={ title={Handbook of proof theory}, },
  book={ series={Stud. Logic Found.}, volume={137}, publisher={North-Holland}, },
  date={1998},
  pages={337--405},
}

\bib{brie}{article}{
  author={van den Berg, Benno},
  author={Briseid, Eyvind},
  author={Safarik, Pavol},
  title={A functional interpretation for nonstandard arithmetic},
  journal={Ann. Pure Appl. Logic},
  volume={163},
  date={2012},
  number={12},
  pages={1962--1994},
}

\bib{bennosam}{article}{
  author={van den Berg, Benno},
  author={Sanders, Sam},
  title={Transfer equals Comprehension},
  journal={Submitted},
  volume={},
  date={2014},
  number={},
  note={Available on arXiv: \url {http://arxiv.org/abs/1409.6881}},
  pages={},
}

\bib{briebenno}{article}{
  author={van den Berg, Benno},
  author={Briseid, Eyvind},
  title={Weak systems for nonstandard arithmetic},
  journal={In preparation},
}

\bib{bish1}{book}{
  author={Bishop, Errett},
  title={Foundations of constructive analysis},
  publisher={McGraw-Hill Book Co.},
  place={New York},
  date={1967},
  pages={xiii+370},
}

\bib{aveirohrbacek}{article}{
  author={Hrbacek, Karel},
  title={Stratified analysis?},
  conference={ title={The strength of nonstandard analysis}, },
  book={ publisher={Springer}, },
  date={2007},
  pages={47--63},
}

\bib{hrbacek3}{article}{
  author={Hrbacek, Karel},
  title={Relative Set Theory: Internal View},
  journal={J. Log. Anal.},
  volume={1},
  date={2009},
  pages={Paper 8, pp.\ 108},
  issn={1759-9008},
}

\bib{hrbacek4}{article}{
  author={Hrbacek, Karel},
  author={Lessmann, Olivier},
  author={O'Donovan, Richard},
  title={Analysis with ultrasmall numbers},
  journal={Amer. Math. Monthly},
  volume={117},
  date={2010},
  number={9},
  pages={801--816},
}

\bib{hrbacek5}{article}{
  author={Hrbacek, Karel},
  title={Relative Set Theory: Some external issues},
  journal={J. Log. Anal.},
  volume={2},
  date={2010},
  pages={pp.\ 37},
}

\bib{kaye}{book}{
  author={Kaye, Richard},
  title={Models of Peano arithmetic},
  series={Oxford Logic Guides},
  volume={15},
  publisher={The Clarendon Press},
  date={1991},
  pages={x+292},
}

\bib{keisler1}{article}{
  author={Keisler, H. Jerome},
  title={Nonstandard arithmetic and reverse mathematics},
  journal={Bull. Symb.\ Logic},
  volume={12},
  date={2006},
  pages={100--125},
}

\bib{kohlenbach2}{article}{
  author={Kohlenbach, Ulrich},
  title={Higher order reverse mathematics},
note={See \cite{simpson1}},
}

\bib{ML}{book}{
   author={Martin-L{\"o}f, Per},
   title={Intuitionistic type theory},
   series={Studies in Proof Theory. Lecture Notes},
   volume={1},
   publisher={Bibliopolis},
   date={1984},
   pages={iv+91},
}

\bib{wownelly}{article}{
  author={Nelson, Edward},
  title={Internal set theory: a new approach to nonstandard analysis},
  journal={Bull. Amer. Math. Soc.},
  volume={83},
  date={1977},
  number={6},
  pages={1165--1198},
}

\bib{peraire}{article}{
  author={P{\'e}raire, Yves},
  title={Th\'eorie relative des ensembles internes},
  language={French},
  journal={Osaka J. Math.},
  volume={29},
  date={1992},
  number={2},
  pages={267--297},
}

\bib{robinson1}{book}{
  author={Robinson, Abraham},
  title={Non-standard analysis},
  publisher={North-Holland},
  place={Amsterdam},
  date={1966},
  pages={xi+293},
}

\bib{yamayamaharehare}{article}{
  author={Sakamoto, Nobuyuki},
  author={Yamazaki, Takeshi},
  title={Uniform versions of some axioms of second order arithmetic},
  journal={MLQ Math. Log. Q.},
  volume={50},
  date={2004},
  number={6},
  pages={587--593},
}

\bib{firstHORM}{article}{
  author={Sanders, Sam},
  title={Uniform and nonstandard existence in Reverse Mathematics},
  year={2014},
  journal={Submitted, Available from arXiv: \url {http://arxiv.org/abs/1502.03618}},
}

\bib{simpson1}{collection}{
   title={Reverse mathematics 2001},
   series={LNL},
   volume={21},
   editor={Simpson, Stephen G.},
   publisher={ASL},
   date={2005},
   pages={x+401},
}

\bib{simpson2}{book}{
  author={Simpson, Stephen G.},
  title={Subsystems of second order arithmetic},
  series={Perspectives in Logic},
  edition={2},
  publisher={CUP},
  date={2009},
  pages={xvi+444},
}

\bib{zweer}{book}{
  author={Soare, Robert I.},
  title={Recursively enumerable sets and degrees},
  series={Perspectives in Mathematical Logic},
  publisher={Springer},
  date={1987},
  pages={xviii+437},
}

\bib{troelstra1}{book}{
  author={Troelstra, Anne Sjerp},
  title={Metamathematical investigation of intuitionistic arithmetic and analysis},
  note={Lecture Notes in Mathematics, Vol.\ 344},
  publisher={Springer Berlin},
  date={1973},
  pages={xv+485},
}

\end{biblist}
\end{bibdiv}
\bye